\def\house#1{{%
    \setbox0=\hbox{$#1$}
    \vrule height \dimexpr\ht0+1.4pt width .4pt depth \dp0\relax
    \vrule height \dimexpr\ht0+1.4pt width \dimexpr\wd0+2pt depth \dimexpr-\ht0-1pt\relax
    \llap{$#1$\kern1pt}
    \vrule height \dimexpr\ht0+1.4pt width .4pt depth \dp0\relax
}}
\def\roof#1{{%
    \setbox0=\hbox{$#1$}
    \vrule height \dimexpr\ht0+1.4pt width .8pt depth \dp0\relax
    \vrule height \dimexpr\ht0+1.8pt width \dimexpr\wd0+2pt depth \dimexpr-\ht0-1pt\relax
    \llap{$#1$\kern1pt}
    \vrule height \dimexpr\ht0+1.4pt width .8pt depth \dp0\relax
}}
\newtheorem{theorem}{Theorem}[section]
\newtheorem{lemma}[theorem]{Lemma}
\newtheorem{conjecture}[theorem]{Conjecture}
\newtheorem{corollary}[theorem]{Corollary}
\theoremstyle{definition}
\theoremstyle{remark}
\numberwithin{equation}{section}
\begin{document}

\title{ The House of a Reciprocal Algebraic Integer}


\author{Dragan Stankov}
\curraddr{}
\email{dstankov@rgf.bg.ac.rs}
\thanks{}


\subjclass[2010]{11C08, 11R06, 11Y40}

\date{}

\dedicatory{}
\keywords{Algebraic integer, the house of algebraic integer, maximal modulus, Schinzel-Zassenhaus conjecture,
Mahler measure}

\begin{abstract}
Let $\alpha$ be an algebraic integer of degree $d$, which is reciprocal. The house of $\alpha$ is the largest modulus of its conjugates. We compute the minimum of the houses of all reciprocal algebraic integers of degree $d$ which are not roots of unity, say $\mathrm{mr}(d)$, for $d$ at most 34. We prove several lemmata and use them to avoid unnecessary calculations. The computations suggest several conjectures. The direct consequence of the last one is the conjecture of Schinzel and Zassenhaus.
We demonstrate the utility of $d$-th power of the house of $\alpha$.
\end{abstract}

\maketitle

\section{Introduction}
Let $\alpha$ be an algebraic integer of degree $d$, with conjugates
$\alpha=\alpha_1, \alpha_2,\ldots,\alpha_d$ and minimal
polynomial $P$. The house of $\alpha$ (and of $P$) is defined by:
\[\house{\alpha} = \max\limits_{1\leq i\leq d}|\alpha_i|.\]
The Mahler measure of $\alpha$ is $M(\alpha) = \prod_{i=1}^{d}
\max(1, |\alpha_i|)$.
Clearly,
$\house{\alpha} > 1$, and a theorem of Kronecker \cite{K} tells us that $\house{\alpha} = 1$ if and only if $\alpha$ is a root
of unity. In 1965, Schinzel and Zassenhaus \cite{SZ} have made the following conjecture:
\begin{conjecture}[SZ]
There is a constant $c > 0$ such that if $\alpha$ is not a root of unity, then $\house{\alpha}\ge 1 + c/d$.
\end{conjecture}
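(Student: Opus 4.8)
The plan is to fix the minimal polynomial $P(x)=\prod_{i=1}^d(x-\alpha_i)\in\mathbb{Z}[x]$, write $\rho=\house{\alpha}$, and attach to $P$ an auxiliary analytic function whose Taylor coefficients are arithmetically constrained, so that a Kronecker/P\'olya-type rigidity theorem forces $\rho$ to stay quantitatively away from $1$. Kronecker's theorem already gives $\rho>1$ strictly (otherwise $\alpha$ is a root of unity); the entire difficulty is to make this quantitative with the correct $1/d$ scaling. The reciprocity hypothesis is convenient here, since the conjugates come in pairs $\{\alpha_i,\alpha_i^{-1}\}$: the reversed polynomial $P^{*}(z)=z^dP(1/z)=\prod_i(1-\alpha_i z)$ carries the same root data as $P$, and $1/P^{*}(z)=\prod_i(1-\alpha_i z)^{-1}=\sum_{n\ge0}h_n z^n$ has integer coefficients $h_n$ (the complete homogeneous symmetric functions of the $\alpha_i$) with radius of convergence exactly $1/\rho$.

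First I would build, out of $P$ and $P^{*}$, a square-root function
\[
f(z)=\sqrt{\frac{A(z)}{B(z)}},\qquad f(0)=1,
\]
where $A,B$ are palindromic integer polynomials manufactured from $P$ and $P^{*}$, chosen so that the Taylor coefficients of $f$ have denominators that are only powers of $2$, while the singularities of $f$ sit on the circles $|z|=\rho$ and $|z|=1/\rho$. The point of extracting a square root is precisely to keep the common denominator controlled (a power of $2$), which is what the arithmetic step can tolerate. Since $P$ is irreducible and non-cyclotomic, the radicand is not a perfect square of a rational function, so $f$ is a genuinely algebraic \emph{irrational} function with real branch cuts. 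I would then invoke the P\'olya--Carlson--Fekete--Szeg\H{o} circle of ideas: a power series of bounded denominator that continues single-valuedly across a set of sufficiently small logarithmic capacity must be rational. Applied to the known-irrational $f$, this dichotomy forces the logarithmic capacity of its singular set to be at least a constant determined by the denominator bound.

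The quantitative conversion is the last step: estimating the capacity of the relevant arcs on $|z|=\rho$ and $|z|=1/\rho$ and imposing the capacity lower bound yields an inequality of the shape $\rho^{d}\ge\kappa>1$, hence $\rho\ge1+(\log\kappa)/d+O(d^{-2})$, which is exactly Schinzel--Zassenhaus with an explicit $c$. I expect the main obstacle to be precisely this arithmetic-capacity input: proving the sharp transfinite-diameter lower bound forced by the controlled denominators (equivalently, the rationality dichotomy with the correct constant) and verifying that the square-root construction genuinely keeps the denominators dyadic. Should that deep step prove too delicate, I would fall back on the classical Dobrowolski-type route through the resultants $\mathrm{Res}\bigl(P(x),P(x^{p})\bigr)$: for a prime $p$ these are divisible by large powers of $p$ unless two conjugates collide modulo a prime above $p$, and bounding them analytically in terms of $\rho$ over several primes yields $\rho\ge1+c(\log\log d/\log d)^{3}/d$ --- weaker by a logarithmic factor, but unconditional and still enough to settle the qualitative conjecture.
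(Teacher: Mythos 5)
You should first be aware that the paper does not prove this statement at all: it is the Schinzel--Zassenhaus conjecture, stated as Conjecture (SZ) and treated as open throughout; the paper's contribution is computational evidence ($\mathrm{mr}(d)$ for $d\le 34$) and the stronger Conjecture (SZB), from which SZ would follow. So there is no paper proof to compare against, and your attempt must stand on its own. What you have written is a recognizable outline of the strategy by which SZ was in fact eventually proved (a square-root auxiliary series plus a P\'olya--Carlson/Fekete--Szeg\H{o} capacity dichotomy), but as a proof it has genuine gaps at exactly the load-bearing points.

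Concretely: (i) you quietly restrict to reciprocal $\alpha$ (``the conjugates come in pairs''), but SZ concerns arbitrary algebraic integers; the reduction needs Smyth's theorem that $M(\alpha)\ge\theta=1.3247\ldots$ for nonreciprocal $\alpha$, whence $\house{\alpha}\ge\theta^{1/d}$ --- precisely the result the paper cites in connection with Boyd's suggestion $c=\tfrac32\log\theta$ --- and you never invoke it. (ii) The arithmetic crux is asserted, not constructed: ``denominators that are only powers of $2$'' is too weak, because if the denominator of the $n$-th coefficient can grow like $2^n$ the rationality threshold degrades from capacity $<1$ to capacity $<\tfrac12$ and the final inequality evaporates; one needs the square root to lie in $\mathbb{Z}[[z]]$ \emph{exactly}, which requires a specific radicand that is a perfect square modulo $2$ (for instance via the congruence $Q(x^2)Q(x^4)\equiv Q(x)^6 \pmod 2$, leading to the series $\sqrt{\smash[b]{P^*_2(z)P^*_4(z)}}$ built from the conjugates' squares and fourth powers), and you give no such construction. (iii) Your singular set is misdescribed: a function singular on the full circles $|z|=\rho^{\pm 1}$ has singular support of capacity at least $1/\rho$, which is nowhere near small enough; the actual argument continues $f$ across those circles and cuts only along a ``hedgehog'' of $O(d)$ radial segments ending at the branch points $1/\alpha_i^{k}$, and the decisive geometric input --- a sharp capacity upper bound for such a union of spikes --- is exactly the step you yourself flag as ``the main obstacle,'' i.e.\ it is missing. (iv) The fallback is incorrect as stated: Dobrowolski's bound $1+c(\log\log d/\log d)^3/d$ is strictly weaker than $1+c/d$ and therefore does \emph{not} ``settle the qualitative conjecture'' --- the qualitative conjecture \emph{is} the uniform $c/d$ bound. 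In sum, this is a sensible research plan aimed at the correct strategy, but with the integrality construction, the hedgehog capacity estimate, and the nonreciprocal reduction all absent, it is not a proof, and it proves strictly more than the paper attempts, since the paper only conjectures the statement.
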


Let $\mathrm{m}(d)$ denote the minimum of $\house{\alpha}$ over $\alpha$ of degree $d$ which are not roots of unity.
Let an $\alpha$ attaining $\mathrm{m}(d)$ be called extremal.
In 1985, D. Boyd \cite{B} conjectured, using a result of C.J. Smyth \cite{S1}, that $c$ should be equal to $3/2 \log \theta$ where $\theta = 1.324717 \ldots$ is the smallest Pisot number, the real root of the polynomial $x^3 - x - 1$. Intending to verify his conjecture that extremal $\alpha$ are always nonreciprocal, Boyd has computed the smallest houses for reciprocal polynomials of even degrees $\le 16$. We continued his computation with even degrees $\le 34$. So our Table \ref{table:nu} is the extension of Boyd's Table 2.






Let $\mathrm{mr}(d)$ denote the minimum of $\house{\alpha}$ over reciprocal $\alpha$ of degree $d$ which are not roots of
unity. 
Let an $\alpha$ attaining $\mathrm{mr}(d)$ be called extremal reciprocal. A polynomial $P(x)$ is primitive if it cannot be expressed as a polynomial in $x^k$, for some
$k \ge 2$. Clearly, any polynomial of degree $2p$ has to be primitive.
It is easy to verify that $\house{P(x^k)} = \sqrt[k]{\house{P(x)}}$.
Then our computations, as summarized in Table \ref{table:nu}, suggest the following:

\begin{conjecture}\label{sec:Sqrt}
If $d \ge 8$ is even and extremal reciprocal $\alpha$ of degree $d$ has minimal polynomial $R_d(x)$ then $\sqrt{\alpha}$ is extremal reciprocal of degree $2d$ and $R_{2d}(x)=R_d(x^2)$.
\end{conjecture}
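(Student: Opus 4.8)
The statement packages two assertions: the numerical identity $\mathrm{mr}(2d)=\sqrt{\mathrm{mr}(d)}$, and the claim that every degree-$2d$ extremizer is of the special form $\sqrt{\alpha}$ for a degree-$d$ extremizer $\alpha$. The plan is to prove them together. Throughout write $r=\mathrm{mr}(d)$ and let $\alpha$ be extremal reciprocal of degree $d$ with minimal polynomial $R_d$, so $\house{\alpha}=r>1$.

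For the inequality $\mathrm{mr}(2d)\le\sqrt{r}$ I would exhibit $R_d(x^2)$ as an admissible competitor of degree $2d$. It is self-reciprocal: with $S(x)=R_d(x^2)$ and $x^dR_d(1/x)=R_d(x)$ one gets $x^{2d}S(1/x)=(x^2)^dR_d(1/x^2)=R_d(x^2)=S(x)$. Its roots $\pm\sqrt{\alpha_i}$ are not roots of unity, since otherwise $\alpha$ would be one, contradicting $\house{\alpha}>1$ and Kronecker's theorem. By the identity $\house{P(x^k)}=\sqrt[k]{\house{P}}$ we then have $\house{R_d(x^2)}=\sqrt{\house{R_d}}=\sqrt{r}$. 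The one delicate point is that $R_d(x^2)$ be irreducible, so that $\sqrt{\alpha}$ genuinely has degree $2d$; this is equivalent to $\alpha$ not being a square in $\mathbb{Q}(\alpha)$, which I would isolate as a short lemma (a square root $\delta\in\mathbb{Q}(\alpha)$ of $\alpha$ would have degree $d$ and house $\sqrt{r}<r$, and the reciprocity of $\alpha$ forces $\pm1/\delta$ to be a conjugate of $\delta$, so in the reciprocal case $\delta$ already undercuts $\mathrm{mr}(d)$). Granting irreducibility, $\mathrm{mr}(2d)\le\sqrt{r}$.

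For the reverse inequality let $\beta$ be any reciprocal non-cyclotomic algebraic integer of degree $2d$ and set $\gamma=\beta^2$. Since $1/\beta$ is a conjugate of $\beta$, so is $1/\beta^2$ a conjugate of $\gamma$; thus $\gamma$ is reciprocal, it is not a root of unity (else $\beta$ would be), and its conjugates lie among $\{\beta_i^2\}$, so $\house{\gamma}=\house{\beta}^2$. The degree of $\gamma$ is either $d$ or $2d$. If $\deg\gamma=d$ then $\house{\beta}^2=\house{\gamma}\ge\mathrm{mr}(d)=r$, whence $\house{\beta}\ge\sqrt{r}$; moreover equality forces $\gamma$ to be extremal of degree $d$, i.e.\ $\beta=\sqrt{\alpha}$ and $R_{2d}(x)=R_d(x^2)$, which is exactly the asserted conclusion.

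The real obstacle is the remaining case $\deg\gamma=2d$, i.e.\ $\mathbb{Q}(\beta^2)=\mathbb{Q}(\beta)$, which happens precisely when the minimal polynomial of $\beta$ is primitive rather than of the shape $Q(x^2)$. Here squaring does not reduce the degree, the value $r$ is unavailable, and one only recovers the vacuous $\house{\gamma}\ge\mathrm{mr}(2d)$; iterating $\beta\mapsto\beta^{2^k}$ keeps the degree at $2d$ while the house grows like $\house{\beta}^{2^k}$, yielding no contradiction on its own. Hence the entire difficulty reduces to showing that a primitive reciprocal integer of degree $2d$ cannot have house below $\sqrt{\mathrm{mr}(d)}$ --- exactly the content the computations confirm for $d\le34$. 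I expect this to be the genuinely hard step: a general proof would need a new lower bound for the house of primitive reciprocal integers, presumably exploiting the Mahler-measure and $\house{\alpha}^d$ estimates flagged in the abstract, rather than the elementary squaring argument that settles the non-primitive case.
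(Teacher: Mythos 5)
There is no proof in the paper to compare against: the statement is Conjecture~\ref{sec:Sqrt}, which the author supports only by the computations of Table~\ref{table:nu} for $d\le 34$ (and, as the paper itself notes, the search is complete only through degree $20$, with only height-one polynomials examined beyond degree $22$), together with the elementary identity $\house{P(x^k)}=\sqrt[k]{\house{P(x)}}$. Your reduction is sound as far as it goes and matches the paper's own framing: the reciprocity of $R_d(x^2)$, the computation $\house{R_d(x^2)}=\sqrt{\mathrm{mr}(d)}$, and your case $\deg\beta^2=d$ are exactly the territory of Lemma~\ref{sec:PolProdRec} and Corollary~\ref{sec:compositeRec}, which compare only \emph{nonprimitive} competitors. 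Your diagnosis that the whole difficulty is the primitive case --- showing no primitive reciprocal non-cyclotomic integer of degree $2d$ has house below $\sqrt{\mathrm{mr}(d)}$ --- is precisely the open content of the conjecture, for which the paper offers no tool (Matveev's bound \eqref{Mat:2} is far too weak, as the discussion after Theorem~\ref{sec:Mat} already shows). So your proposal is an honest and correct reduction, not a proof, and it could not have been completed from the paper's ingredients; your self-assessment of where the genuinely hard step lies is accurate.

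Beyond the acknowledged gap, one step you dismiss as a ``short lemma'' is itself genuinely gapped: the irreducibility of $R_d(x^2)$. If $\alpha=\delta^2$ with $\delta\in\mathbb{Q}(\alpha)$, reciprocity of $\alpha$ only forces $\pm 1/\delta$ to be a conjugate of $\delta$, and in the minus case $\delta$ is \emph{not} reciprocal, so it does not undercut $\mathrm{mr}(d)$ and your contradiction evaporates. This loophole is real: for $\alpha=(3+\sqrt5)/2$, a root of $x^2-3x+1$ (reciprocal, with house $2.618\ldots=\mathrm{mr}(2)$), one has $\alpha=\phi^2$ with $\phi=(1+\sqrt5)/2\in\mathbb{Q}(\alpha)$; the conjugate of $\phi$ is $-1/\phi$, the composed polynomial factors as $x^4-3x^2+1=(x^2-x-1)(x^2+x-1)$, and $\phi$, with house $\sqrt{\mathrm{mr}(2)}<\mathrm{mr}(2)$, contradicts nothing because it is not reciprocal. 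The example sits at $d=2$, outside the conjecture's range $d\ge 8$, but nothing in your sketch uses $d\ge 8$, so an additional idea would be needed to exclude such anti-reciprocal square roots. Finally, your equality case tacitly assumes the degree-$d$ extremal polynomial is unique (otherwise $\house{\beta}=\sqrt{\mathrm{mr}(d)}$ only gives $R_{2d}(x)=\tilde R_d(x^2)$ for \emph{some} extremal $\tilde R_d$); the conjecture's notation presumes this too, but a careful write-up should flag it.
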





E. M. Matveev \cite{M} proved the following result:

\begin{theorem}\label{sec:Mat} Let $\alpha$ be an algebraic integer, not a root of unity, and let $d =
 deg(\alpha) \ge 2$. Then
\begin{equation}\label{Mat:1}
\house{\alpha}\ge \exp(\log(d + 0.5)/d^2).
\end{equation}
 Moreover, if $\alpha$ is reciprocal and $d\ge 6$, then
\begin{equation}\label{Mat:2}
\house{\alpha}\ge \exp(3\log(d/2)/d^2).
\end{equation}
\end{theorem}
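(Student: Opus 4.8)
The plan is to reduce both inequalities to a single lower bound of the shape $\house{\alpha}^{d^2}\ge B(d)$, and to obtain such a bound by exhibiting a nonzero rational integer whose absolute value I can trap between an arithmetic lower estimate (from a divisibility) and an analytic upper estimate expressed through $t:=\house{\alpha}$. Writing $P(x)=\prod_{i=1}^{d}(x-\alpha_i)$ for the monic minimal polynomial, the natural candidate is a resultant of $P$ with a transform of itself: such a resultant factors as a product $\prod_{i,j}(\alpha_i-\beta_j)$ over $d^2$ pairs of conjugates, and so automatically carries the exponent $d^2$ that the target demands.

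Concretely, for a prime $p$ I would study $R_p=\operatorname{Res}\!\big(P(x),P(x^p)\big)=\prod_{i,j}(\alpha_i-\alpha_j^{\,p})=\prod_{i}P(\alpha_i^{\,p})$. The first step is to guarantee $R_p\neq 0$. A vanishing factor $P(\alpha_i^{\,p})=0$ would mean $\alpha_i^{\,p}$ is itself a root of $P$; since $\alpha_i^{\,p}\in\mathbb{Q}(\alpha_i)$ has degree $d$, the assignment $\alpha_i\mapsto\alpha_i^{\,p}$ extends to a $\mathbb{Q}$-automorphism $\sigma$ of $\mathbb{Q}(\alpha_i)$, and its finite order $m$ forces $\alpha_i^{\,p^{m}}=\sigma^{m}(\alpha_i)=\alpha_i$, i.e. $\alpha$ is a root of unity, contradicting the hypothesis through Kronecker's theorem. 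The second step is the Frobenius congruence $P(x^p)\equiv P(x)^p\pmod p$, which shows each algebraic integer $P(\alpha_i^{\,p})$ is divisible by $p$, whence $p^{\,d}\mid R_p$ and $|R_p|\ge p^{\,d}$. On the other side $|R_p|=\prod_{i,j}|\alpha_i-\alpha_j^{\,p}|\le (t+t^{\,p})^{d^2}$, and taking $d^2$-th roots gives $p^{1/d}\le t+t^{\,p}$.

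The main obstacle, and indeed the entire content of the sharp constants $d+0.5$ and $3\log(d/2)$, is that the crude factor-by-factor estimate is wasteful near $t=1$, since each difference $\alpha_i-\alpha_j^{\,p}$ is controlled only by a \emph{sum} of moduli rather than being genuinely small. To recover the stated form I would refine the upper bound by separating the conjugates of modulus $<1$ from those of modulus $\ge 1$, and by replacing the fixed prime with an auxiliary exponent optimized as a function of $d$, so that the guaranteed divisibility outgrows the analytic bound. It is precisely this constant-sharp bookkeeping, rather than any new idea, that I expect to be delicate, and it is where the additive correction $0.5$ should emerge.

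For the reciprocal refinement I would exploit the symmetry $\alpha_i\leftrightarrow 1/\alpha_i$: the conjugates occur in reciprocal pairs placed symmetrically with respect to the unit circle, which simultaneously halves the number of genuinely large factors in the product and permits replacing $\alpha_j^{\,p}$ by $\alpha_j^{-p}$ wherever that is advantageous. Heuristically this pairing roughly triples the usable arithmetic information compared with the generic situation, which is the source of the factor $3$; combined with the same optimization it should yield $\house{\alpha}^{d^2}\ge (d/2)^3$ for $d\ge 6$, equivalently \eqref{Mat:2}. Throughout, the two load-bearing points are the non-vanishing of the resultant (handled cleanly by Kronecker's theorem) and the constant-sharp optimization in the final inequality.
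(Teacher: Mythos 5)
You should first be aware that the paper contains no proof of this statement: Theorem~\ref{sec:Mat} is quoted verbatim from Matveev \cite{M}, so there is no internal argument to compare against and your sketch has to stand on its own. Its opening moves are individually sound and classical (they are Dobrowolski's): the nonvanishing of $R_p=\operatorname{Res}\bigl(P(x),P(x^p)\bigr)$ via the automorphism $\alpha\mapsto\alpha^p$ of $\mathbb{Q}(\alpha)$ together with Kronecker's theorem, the Frobenius congruence giving $p^{d}\mid R_p$, and the estimate $|R_p|\le (t+t^p)^{d^2}$ with $t=\house{\alpha}$. The fatal problem is the exponent mismatch between the arithmetic side ($p^{d}$) and the analytic side ($d^2$ factors). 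Your conclusion $p^{1/d}\le t+t^p$ is vacuous for \emph{every} prime $p\le 2^{d}$, since the right-hand side is already at least $2$ for all $t\ge 1$; and the analytic bound cannot be pushed below $2^{d^2}$ within this design --- even the sharper factorwise estimate $|\alpha_i-\alpha_j^p|\le 2\max(1,|\alpha_i|)\max(1,|\alpha_j|)^p$, i.e.\ $|R_p|\le 2^{d^2}M(\alpha)^{(p+1)d}$, retains the parasitic $2^{d^2}$, so splitting the conjugates by modulus does not repair it. Taking $p>2^{d}$, the most the method yields is $\log t\ge (\log p-d\log 2)/(pd)$, optimized near $p\approx e\,2^{d}$, i.e.\ $\house{\alpha}^{d^2}\ge \exp\bigl(d/(e\,2^{d})\bigr)\to 1$ --- exponentially weaker than the target $d+0.5$ of \eqref{Mat:1}. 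So what you defer as ``constant-sharp bookkeeping'' is not bookkeeping: no optimization of the single prime rescues the single-resultant scheme, and the deferred step is the entire theorem.

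The mechanism that does reach this strength (Dobrowolski's elementary house bound, which Matveev refines) replaces the $d^2$-fold product by the power sums $s_k=\sum_{i}\alpha_i^{k}\in\mathbb{Z}$, for which the analytic estimate $|s_k|\le d\,t^{k}$ is \emph{linear} in $d$ rather than exponential in $d^2$, combined with the congruence $s_{pk}\equiv s_k \pmod{p}$: forcing coincidences among integers of size $O(d)$ modulo primes of size comparable to $d$, over exponents $k$ ranging up to about $d^2$, is exactly what produces bounds of the shape $\house{\alpha}^{d^2}\ge cd$, with the reciprocity hypothesis entering through the extra symmetry $s_{-k}=s_k$ --- the natural source of the enlarged exponent range behind the factor $3$ in \eqref{Mat:2}. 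By contrast, your justification of that factor (``heuristically this pairing roughly triples the usable arithmetic information'') is not an argument: the sketch never identifies the nonzero integer whose two-sided estimate would yield $3\log(d/2)$. In short: correct and standard entry point, but the quantitative core --- the only content of the theorem --- is missing, and the framework you chose provably cannot supply it.
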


Let $\sigma=1.169283\ldots$ be the house of $x^8+x^5+x^4+x^3+1$. It is proved in Table 1 that $\sigma$ is extremal reciprocal for $d=8$.
Then we obtain the following consequence of Conjecture \ref{sec:Sqrt}: if $d=2^k$ and $k\ge 3$ then 
\begin{equation}\label{Mat:3}
\mathrm{mr}(d)=\sigma^{8/d}.
\end{equation}
It is not hard to show that we get from \eqref{Mat:3} a better low bound than from \eqref{Mat:2} i.e.
$\sigma^{8/2^k}>\exp(3\log(d/2)/d^2)=(2^{k-1})^{3/(2^{2k})}$. Since $\sigma^5>2$ it follows that
$\sigma^{8/2^k}>2^{8/(5\cdot 2^k)}$. It remains to be shown that
$2^{8/(5\cdot 2^k)}>(2^{k-1})^{3/(2^{2k})}$ which is, however, equivalent with a true inequality $2^{k+3}>15(k-1)$.

The following lemmata can help us to avoid unnecessary calculations.

\begin{lemma}\label{sec:PolType}
If $d\ge 10$ and $P(x)$ is a reciprocal polynomial with coefficients $-1,0,1$ of degree $d$ such that
\begin{equation}\label{Pol:1}
P(x)=x^d-x^{d-1}-x^{d-2}-x^{d-3}-mx^{d-4}+\sum_{k=5}^{d-5}a_{d-k}x^{d-k}-mx^4-x^3-x^2-x+1,
\end{equation}
$m\in\{0,1\}$, then $P(x)$ has a real root $\alpha$ greater than $3/2$ and, consequently, $\house{\alpha}\ge 3/2$.
\end{lemma}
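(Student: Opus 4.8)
The plan is to produce a sign change for $P$ on the positive real axis. Since $P$ is monic, $P(x)\to+\infty$ as $x\to+\infty$, so it suffices to show that $P(3/2)<0$; the intermediate value theorem then yields a real root $\alpha\in(3/2,\infty)$, and because $\alpha$ is itself one of the conjugates of the algebraic integer it defines, $\house{\alpha}\ge\alpha>3/2$. Thus the whole lemma reduces to a single uniform estimate at the point $t=3/2$.

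To estimate $P(3/2)$, first I would isolate the four leading terms and factor them as
\[
x^{d}-x^{d-1}-x^{d-2}-x^{d-3}=x^{d-3}\bigl(x^{3}-x^{2}-x-1\bigr),
\]
and evaluate $g(t):=t^{3}-t^{2}-t-1$ at $t=3/2$, obtaining $g(3/2)=-11/8<0$. This supplies a negative leading contribution of magnitude $(11/8)\,t^{d-3}$, which is what must dominate every other term. Next I would bound $P(3/2)$ from above by replacing each free coefficient by the value maximizing it at the positive point $t=3/2$: namely $m=0$, so that the $-mx^{d-4}$ and $-mx^{4}$ terms vanish, and $a_{d-k}=1$ for all $5\le k\le d-5$. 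Here the hypothesis $d\ge 10$ is exactly what makes the index blocks $\{d,\dots,d-4\}$, $\{d-5,\dots,5\}$, $\{4,\dots,0\}$ pairwise disjoint, so the middle block is a genuine sum $\sum_{j=5}^{d-5}t^{j}$. Since reciprocity only constrains, rather than enlarges, the set of admissible coefficient vectors, this substitution legitimately overestimates $P(3/2)$.

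Summing the middle block as a (sub)geometric series with ratio $t=3/2$ gives
\[
\sum_{j=5}^{d-5}t^{j}<\frac{t^{d-4}}{t-1}=2\,t^{d-4}=\frac{4}{3}\,t^{d-3},
\]
so the leading and middle contributions together are at most $\bigl(-\tfrac{11}{8}+\tfrac{4}{3}\bigr)t^{d-3}=-\tfrac{1}{24}\,t^{d-3}<0$, while the fixed low-order tail $-t^{3}-t^{2}-t+1$ equals $-49/8<0$. Adding these shows $P(3/2)<0$ uniformly over all admissible choices of $m$ and of the middle coefficients, which finishes the proof.

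The delicate point is that the margin is thin: the decisive cancellation $-\tfrac{11}{8}+\tfrac{4}{3}=-\tfrac{1}{24}$ is only barely negative, so the geometric bound on the middle block must be carried out exactly at $t=3/2$ rather than loosely, as any cruder estimate of $\sum t^{j}$ would fail to beat the $11/8$ coming from $g(3/2)$. This is precisely why the statement requires both the special shape of the four leading (and four trailing) coefficients and the threshold $d\ge 10$, and verifying that these two features are genuinely what the inequality needs is the main thing to get right.
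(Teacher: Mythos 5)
Your proof is correct and takes essentially the same route as the paper's: both reduce the lemma to showing $P(3/2)<0$ by maximizing the free coefficients (taking $m=0$ and all $a_{d-k}=1$) and summing the resulting geometric middle block exactly at $t=3/2$, then locating a root beyond $3/2$ on the right (the paper by checking $P(2)>0$, you by monicity at $+\infty$ --- an immaterial difference). Your exact-fraction cancellation $-\tfrac{11}{8}+\tfrac{4}{3}=-\tfrac{1}{24}$ per $t^{d-3}$ is the same estimate as the paper's decimal form $1.5^{d-4}(-0.0625)$, so nothing further is needed.
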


\begin{proof}[Proof]
It is obvious that $P(2)\ge 2^d+1-\sum_{k=1}^{d-1}2^k=2^d+1-2(2^{d-1}-1)=3>0$ so that the theorem will be proved if we show that $P(1.5)<0$.
\begin{eqnarray*}
P(1.5)&\le & 1.5^d-1.5^{d-1}-1.5^{d-2}-1.5^{d-3}+\\
& &+\;\sum_{k=5}^{d-5}1.5^{d-k}-1.5^3-1.5^2-1.5+1\\
&=&1.5^d-1.5^{d-1}-1.5^{d-2}-1.5^{d-3}+\\
& &+\;1.5^5\cdot 2(1.5^{d-9}-1)-1.5^3-1.5^2-1.5+1\\
&=&1.5^d-1.5^{d-1}-1.5^{d-2}-1.5^{d-3}+\\
& &+\;2\cdot 1.5^{d-4}-2\cdot 1.5^5-1.5^3-1.5^2-1.5+1\\
&=&1.5^{d-4}(1.5^4-1.5^{3}-1.5^{2}-1.5+2)-21.3125\\
&=&1.5^{d-4}(-0.0625)-21.3125\\
&<&0.
\end{eqnarray*}

\end{proof}

\begin{lemma}\label{sec:PolType2}
If $d\ge 6$ and $P(x)$ is a reciprocal polynomial with coefficients $-2$, $-1$, $0$, $1$, $2$ of degree $d$ such that
\begin{equation}\label{Pol:2}
P(x)=x^d-2x^{d-1}-2x^{d-2}+\sum_{k=3}^{d-3}a_{d-k}x^{d-k}-2x^2-2x+1,
\end{equation}
then $P(x)$ has a real root $\alpha$ greater than $2$ and, consequently, $\house{\alpha}\ge 2$.
\end{lemma}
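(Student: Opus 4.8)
The plan is to mimic the proof of Lemma \ref{sec:PolType}: I will pin down the real root by the intermediate value theorem, producing a point where $P$ is provably negative together with a larger point where $P$ is provably positive, both bounds being uniform in $d\ge 6$. Here the convenient test points are $x=2$, where I will force $P(2)<0$, and $x=3$, where I will force $P(3)>0$; the sign change then yields a real root in $(2,3)$, which in particular exceeds $2$ and gives $\house{\alpha}\ge 2$.

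For the upper bound at $x=2$ I would replace each free coefficient $a_{d-k}$ by its maximal value $2$. The leading block $2^d-2\cdot2^{d-1}-2\cdot2^{d-2}$ collapses to $-2^{d-1}$, the free band of exponents $3,\dots,d-3$ contributes $2\sum_{j=3}^{d-3}2^{j}=2^{d-1}-16$, and the fixed tail gives $-2\cdot2^2-2\cdot2+1=-11$. The $\pm 2^{d-1}$ terms cancel, leaving the $d$-independent bound $P(2)\le-16-11=-27<0$.

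For the lower bound at $x=3$ I would instead replace each free coefficient by its minimal value $-2$. Since the prescribed coefficients of $x^{d-1},x^{d-2},x^{2},x$ already equal $-2$, every exponent from $1$ to $d-1$ now carries coefficient $-2$, so the estimate collapses to a single geometric series:
\[
P(3)\ge 3^d-2\sum_{j=1}^{d-1}3^{j}+1=3^d-(3^d-3)+1=4>0.
\]
Combining the two inequalities, $P$ changes sign on $(2,3)$ and hence has a real root $\alpha>2$, as claimed.

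Unlike Lemma \ref{sec:PolType}, no single evaluation point is delicate: both estimates are comfortably bounded away from zero for all admissible $d$, so the only care needed is the bookkeeping of the geometric sums and keeping the fixed end blocks separate from the free middle band. The mildest subtlety is the asymmetry between the two bounds — the lower bound at $x=3$ telescopes into one clean series because the minimizing choice $-2$ agrees with the prescribed end coefficients, whereas the upper bound at $x=2$ must treat the $+2$ middle band and the $-2$ end blocks separately — but this is organizational rather than a genuine obstacle.
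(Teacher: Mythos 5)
Your proposal is correct and follows essentially the same route as the paper: the paper likewise evaluates at $x=3$, bounding $P(3)\ge 3^d+1-2\sum_{k=1}^{d-1}3^k=4>0$, and at $x=2$, bounding $P(2)\le -2^{d-1}+2\sum_{k=3}^{d-3}2^{d-k}-11=-27<0$, then invokes the sign change. Your bookkeeping (cancellation of the $\pm 2^{d-1}$ terms, the $d$-independent values $-27$ and $4$) matches the paper's computation exactly.
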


\begin{proof}[Proof]
It is obvious that $P(3)\ge 3^d+1-2\sum_{k=1}^{d-1}3^k=3^d+1-3(3^{d-1}-1)=4>0$ so that the theorem will be proved if we show that $P(2)<0$.
\begin{eqnarray*}
P(2)&\le & 2^d-2\cdot 2^{d-1}-2\cdot 2^{d-2}+\\
& &+\;2\sum_{k=3}^{d-3}2^{d-k}-2\cdot 2^2-2\cdot 2+1\\
&=&-2^{d-1}+2^4(2^{d-5}-1)-2\cdot 2^2-2\cdot 2+1\\
&=&-27\\
&<&0.
\end{eqnarray*}

\end{proof}

\begin{lemma}\label{sec:PolType3}
If $d\ge 10$ and $P(x)$ is a reciprocal polynomial with coefficients $-2$, $-1$, $0$, $1$, $2$ of degree $d$ such that
\begin{equation}\label{Pol:3}
P(x)=x^d-2x^{d-1}-x^{d-2}-mx^{d-3}+\sum_{k=4}^{d-4}a_{d-k}x^{d-k}-mx^3-x^2-2x+1,
\end{equation}
$m\in\{1,2\}$, then $P(x)$ has a real root $\alpha$ greater than $2$ and, consequently, $\house{\alpha}\ge 2$.
\end{lemma}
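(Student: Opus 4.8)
The plan is to follow the template set by Lemmas \ref{sec:PolType} and \ref{sec:PolType2}: since the leading coefficient of $P$ is positive, it suffices to exhibit the two points $2<3$ with $P(2)<0$ and $P(3)>0$, and then invoke the intermediate value theorem to locate a real root $\alpha\in(2,3)$. Since $\alpha$ is a conjugate of itself, this immediately gives $\house{\alpha}\ge\alpha>2$, which is the assertion. So the whole argument reduces to the two sign computations, both of which should be handled by replacing the unknown coefficients by their extremal admissible values and summing a geometric series.

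For the value $P(3)$ I would discard the precise shape of the polynomial and use only that every coefficient other than the leading $1$ and the constant $1$ lies in $\{-2,-1,0,1,2\}$, hence is at least $-2$. This covers the forced entries $-2,-1,-m$ and $-m,-1,-2$, all of which satisfy $\ge -2$ regardless of whether $m=1$ or $m=2$. Replacing every intermediate coefficient by $-2$ and summing the resulting geometric series reproduces exactly the estimate already obtained in Lemma \ref{sec:PolType2}, namely $P(3)\ge 3^d+1-2\sum_{k=1}^{d-1}3^k=4>0$.

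The substantive computation is $P(2)<0$. Here I would bound $P(2)$ from above by taking each free coefficient $a_{d-k}$ equal to its maximum $+2$ and by using $m\ge 1$ to replace $-m\,2^{d-3}$ and $-m\,2^3$ by $-2^{d-3}$ and $-8$. The two decisive cancellations are $2^d-2\cdot 2^{d-1}=0$ at the top, and the middle geometric sum $2\sum_{k=4}^{d-4}2^{d-k}=2^{d-2}-32$, which exactly annihilates the $-2^{d-2}$ term. What survives is
\[
P(2)\le -2^{d-3}-47<0,
\]
an inequality valid for every $d\ge 10$, the hypothesis guaranteeing that the middle range $4\le k\le d-4$ is nonempty and that the geometric-series identity applies.

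I do not anticipate a genuine obstacle: the argument is just an estimate combined with the intermediate value theorem, exactly as in the preceding two lemmata. The only point requiring care is the dependence on $m$. For the upper bound on $P(2)$ the parameter enters solely through the negative terms $-m\,2^{d-3}$ and $-8m$, so the worst (largest) case is $m=1$; proving the inequality there covers $m=2$ automatically. For the lower bound on $P(3)$ the value of $m$ is irrelevant once the associated coefficients are bounded below by $-2$. Hence both choices of $m$ are disposed of uniformly, and the lemma follows.
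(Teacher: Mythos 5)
Your proposal is correct and follows essentially the same route as the paper's proof: sign evaluations at $x=2$ and $x=3$ plus the intermediate value theorem, with the coefficients bounded extremally and the middle geometric series summed, yielding the identical final estimate $P(2)\le -2^{d-3}-47<0$ (the paper writes this as $-2^{d-3}-2^5-15$). The only, harmless, difference is at $x=3$: the paper retains the gains $3^{d-2}$ and $3^2$ from the coefficients that are exactly $-1$, obtaining $P(3)\ge 3^{d-2}+13$, whereas your cruder bound $P(3)\ge 4$ from bounding every intermediate coefficient below by $-2$ suffices equally well.
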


\begin{proof}[Proof]
At first we show that $P(3)$ is positive:
\begin{eqnarray*}
P(3)&\ge& 3^d+3^{d-2}+3^2+1-2\sum_{k=1}^{d-1}3^k\\
&=&3^d+3^{d-2}+3^2+1-3(3^{d-1}-1)\\
&=&3^{d-2}+3^2+3+1\\
&>&0.
\end{eqnarray*}
The theorem will be proved if we show that $P(2)$ is negative:
\begin{eqnarray*}
P(2)&\le & 2^d-2\cdot 2^{d-1}-2^{d-2}-2^{d-3}+\\
& &+\;2\sum_{k=4}^{d-4}2^{d-k}-2^3-2^2-2\cdot 2+1\\
&=&-2^{d-2}-2^{d-3}+2^4\cdot 2(2^{d-7}-1)-2^3-2^2-4+1\\
&=&-2^{d-3}-2^5-15\\
&<&0.
\end{eqnarray*}

\end{proof}

The obvious consequence of the lemma \ref{sec:PolType} is: Mahler measure of a polynomial of type \eqref{Pol:1}, \eqref{Pol:2}, \eqref{Pol:3} is greater than $3/2$.
So if we have to find polynomials of small Mahler measure we can omit polynomials of these types.

\section{Polynomials of composite and prime degrees}

Table \ref{table:theta} of Rhin and Wu suggests that if $d\ge 9$ is a composite number then $P_d(x)$ is a nonprimitive polynomial. We add the column $\mathrm{m}^{d}(d)$ to the table which is necessary to present the following lemma, corollary and conjecture.

\begin{lemma}\label{sec:PolProd}
Let $\mathrm{m}(d)$ is attained for $\alpha_d$ with minimal polynomial $P_d(x)$. If $\mathrm{m}^{d_1}(d_1)<\mathrm{m}^{d_2}(d_2)$ then the house of $P_{d_1}(x^{d_2})$ is less than the house of $P_{d_2}(x^{d_1})$.
\end{lemma}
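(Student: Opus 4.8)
The plan is to reduce the claimed comparison of houses directly to the hypothesis by repeated use of the elementary identity $\house{P(x^k)}=\sqrt[k]{\house{P(x)}}$ recorded in the introduction. First I would note the two facts that follow immediately from the definitions: since $\alpha_{d_1}$ attains $\mathrm{m}(d_1)$ and has minimal polynomial $P_{d_1}$, we have $\house{P_{d_1}(x)}=\mathrm{m}(d_1)$, and likewise $\house{P_{d_2}(x)}=\mathrm{m}(d_2)$.

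Next I would apply the identity with $k=d_2$ to $P_{d_1}$ and with $k=d_1$ to $P_{d_2}$, obtaining
\[
\house{P_{d_1}(x^{d_2})}=\sqrt[d_2]{\house{P_{d_1}(x)}}=\mathrm{m}(d_1)^{1/d_2},
\qquad
\house{P_{d_2}(x^{d_1})}=\sqrt[d_1]{\house{P_{d_2}(x)}}=\mathrm{m}(d_2)^{1/d_1}.
\]
Both substituted polynomials have degree $d_1 d_2$, so the comparison is genuinely between the houses of two polynomials of the same degree.

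It then remains to show $\mathrm{m}(d_1)^{1/d_2}<\mathrm{m}(d_2)^{1/d_1}$. Since a number that is not a root of unity has house strictly greater than $1$ by Kronecker's theorem, both bases exceed $1$ and all quantities are positive; because $t\mapsto t^{d_1 d_2}$ is strictly increasing on the positive reals, raising both sides to the power $d_1 d_2$ preserves the strict inequality and transforms it into $\mathrm{m}(d_1)^{d_1}<\mathrm{m}(d_2)^{d_2}$, that is, $\mathrm{m}^{d_1}(d_1)<\mathrm{m}^{d_2}(d_2)$. This last inequality is precisely the hypothesis, so reading the equivalences in reverse yields the conclusion.

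There is no serious obstacle in this argument, as it is a clean algebraic manipulation built on the power-substitution identity. The only points deserving a word of care are that this identity applies to an arbitrary polynomial, not merely to an irreducible one (its proof only uses that the roots of $P(x^k)$ are the $k$-th roots of the roots of $P$, so their maximal modulus is the $k$-th root of $\house{P}$), and that each exponentiation step is strictly monotone because the bases lie in $(1,\infty)$; both are immediate.
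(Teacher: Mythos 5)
Your proof is correct and follows essentially the same route as the paper's: both reduce the claim to the identity $\house{P(x^k)}=\sqrt[k]{\house{P(x)}}$ together with the observation that $\mathrm{m}^{d_1}(d_1)<\mathrm{m}^{d_2}(d_2)$ is equivalent, by strict monotonicity of $t\mapsto t^{d_1d_2}$ on the positive reals, to $\mathrm{m}(d_1)^{1/d_2}<\mathrm{m}(d_2)^{1/d_1}$. Your write-up is in fact slightly more careful than the paper's, since it makes the monotonicity step and the positivity of the quantities explicit.
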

\begin{proof}
If $\mathrm{m}^{d_1}(d_1)<\mathrm{m}^{d_2}(d_2)$ then $\mathrm{m}^{1/d_2}(d_1)<\mathrm{m}^{1/d_1}(d_2)$. Finally we should recall that the house of $P_{d_1}(x^{d_2})$ is equal to $\mathrm{m}^{1/d_2}(d_1)$ and the house of $P_{d_2}(x^{d_1})$ is equal to $\mathrm{m}^{1/d_1}(d_2)$.
\end{proof}

\begin{corollary}\label{sec:composite}
Let $d$ be a composite natural number. Let $\mathrm{m}(b_i)$ is attained for $\alpha_{b_i}$ with minimal polynomial $P_{b_i}(x)$ where $1 \le b_i <d$, are natural numbers which are divisors of $d$ such that $P_{b_i}(x)$ is a primitive polynomial, $i=1,2\ldots,k$.
If $\mathrm{m}^{b_1}(b_1)<\mathrm{m}^{b_2}(b_2)<\cdots<\mathrm{m}^{b_k}(b_k)$ then the nonprimitive polynomial $P_{b_1}(x^{d/b_1})$ has the house which is less than the house of any other nonprimitive polynomial of degree $d$.
\end{corollary}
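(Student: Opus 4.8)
The plan is to bound below the house of an arbitrary nonprimitive polynomial of degree $d$ by $\house{P_{b_1}(x^{d/b_1})}$ and then to force equality to occur only for $P_{b_1}(x^{d/b_1})$ itself. First I would record, using the identity $\house{P(x^{k})}=\sqrt[k]{\house{P(x)}}$ from the Introduction, that for any proper divisor $e$ of $d$ the canonical nonprimitive polynomial $P_{e}(x^{d/e})$ has house
\[
\house{P_{e}(x^{d/e})}=\house{P_e}^{e/d}=\mathrm{m}(e)^{e/d}=\bigl(\mathrm{m}^{e}(e)\bigr)^{1/d}.
\]
Because $t\mapsto t^{1/d}$ is strictly increasing, comparing the houses of these candidates is exactly comparing the numbers $\mathrm{m}^{e}(e)$; this is the content of Lemma \ref{sec:PolProd}, and it immediately singles out $b_1$ among the primitive divisors once the chain $\mathrm{m}^{b_1}(b_1)<\dots<\mathrm{m}^{b_k}(b_k)$ is assumed.

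Next I would prove the key inequality $\mathrm{m}^{e}(e)\ge \mathrm{m}^{b_1}(b_1)$ for \emph{every} proper divisor $e$ of $d$, by strong induction on $e$. If the extremal polynomial $P_e$ is primitive, then $e$ is one of the $b_i$ and the hypothesis gives the inequality directly. If $P_e$ is nonprimitive, I would write $P_e(x)=P'(x^{j'})$ with $P'$ primitive of degree $e'=e/j'<e$; since $P'$ is the minimal polynomial of a non-root-of-unity algebraic integer of degree $e'$ one has $\house{P'}\ge \mathrm{m}(e')$, whence $\mathrm{m}(e)=\house{P'}^{1/j'}\ge \mathrm{m}(e')^{1/j'}$ and, raising to the power $e=j'e'$, $\mathrm{m}^{e}(e)\ge \mathrm{m}^{e'}(e')$; the inductive hypothesis on the proper divisor $e'$ then closes the step. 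With this in hand, a general nonprimitive (irreducible) polynomial $R$, not a product of cyclotomic polynomials, factors as $R(x)=S(x^{j})$ with $j\ge 2$ maximal and $S$ primitive of degree $e=d/j$, so that
\[
\house{R}=\house{S}^{1/j}\ge \mathrm{m}(e)^{1/j}=\bigl(\mathrm{m}^{e}(e)\bigr)^{1/d}\ge \bigl(\mathrm{m}^{b_1}(b_1)\bigr)^{1/d}=\house{P_{b_1}(x^{d/b_1})}.
\]

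Finally I would upgrade this to a strict inequality whenever $R\ne P_{b_1}(x^{d/b_1})$. Equality throughout would force simultaneously $\house{S}=\mathrm{m}(e)$, so that $S$ is extremal of degree $e$, and $\mathrm{m}^{e}(e)=\mathrm{m}^{b_1}(b_1)$, so that the reduction in the second paragraph must terminate at $b_1$; the strictness of the chain then leaves $R=P_{b_1}(x^{d/b_1})$ as the only possibility. I expect the main obstacle to lie precisely here: one must control the divisors $e$ whose extremal polynomial is itself nonprimitive, ensuring that the induction always descends to one of the primitive divisors $b_i$, and one must know that the minimum $\mathrm{m}(b_1)$ is attained essentially uniquely by $P_{b_1}$, so that no competing extremal polynomial of the same degree yields a second nonprimitive polynomial with the same house. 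A separate remark is needed to restrict the statement to minimal polynomials (or to handle a reducible core $S$), since for reducible $S$ the house is the maximum of the houses of its irreducible factors, whose degrees need not divide $d$; in that case I would pass to the maximal-modulus root of $R$ and its minimal polynomial, whose degree and house must then be fed back into the same divisor bound.
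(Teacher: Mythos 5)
Your proof is correct, and it does substantially more than the paper's own argument: the paper's entire proof is the one sentence that the claim ``follows straightforwardly from Lemma \ref{sec:PolProd}'', i.e.\ it only compares the canonical candidates $P_{b_i}(x^{d/b_i})$ with one another, which is exactly your first paragraph's computation $\house{P_e(x^{d/e})}=\mathrm{m}(e)^{e/d}=\bigl(\mathrm{m}^{e}(e)\bigr)^{1/d}$. (Even for that step your direct computation is cleaner, since Lemma \ref{sec:PolProd} as literally stated compares $P_{d_1}(x^{d_2})$ with $P_{d_2}(x^{d_1})$, two polynomials of degree $d_1d_2$, and does not cover compositions $P_{b_i}(x^{d/b_i})$ of a common degree $d$ with $b_1b_2\ne d$; the needed form with $k_1d_1=k_2d_2$ only appears later in the paper as Lemma \ref{sec:PolProdRec}, in the reciprocal setting.) Your second paragraph --- the bound $\house{S}\ge\mathrm{m}(e)$ for a non-extremal core $S$, plus the descent induction through divisors $e$ whose extremal polynomial is itself nonprimitive, so that every chain bottoms out at some $b_i$ --- is precisely what is needed to justify the corollary's literal claim about \emph{any} nonprimitive polynomial of degree $d$, and the paper silently omits all of it. Your closing caveats are genuine defects of the statement, not of your argument: strict inequality cannot hold as stated, because for a primitive $P_{b_1}$ the mirror polynomial $(-1)^{b_1}P_{b_1}(-x)$ is a distinct minimal polynomial with the same house (it coincides with $P_{b_1}$ only if $P_{b_1}$ is even, hence nonprimitive, or divisible by $x$), so e.g.\ $x^d-2$ and $x^d+2$ are two nonprimitive polynomials of degree $d$ with equal house; thus the conclusion can only be ``not greater than'' without a uniqueness hypothesis. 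Likewise, ``nonprimitive polynomial'' must be read as an irreducible minimal polynomial of a non-root of unity --- for irreducible $R=S(x^j)$ the core $S$ is automatically irreducible, while reducible examples such as $x^4+x^2+1$ (house $1$) break the statement outright --- so your worry about a reducible core evaporates under the intended reading. In short, the paper's approach buys brevity and proves only the comparison among the candidates $P_{b_i}(x^{d/b_i})$; yours proves the corrected, non-strict form of the full stated claim.
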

\begin{proof}
The claim follows straightforwardly from Lemma \ref{sec:PolProd}.
\end{proof}

If $p$ is a prime number then it is obvious that the minimal polynomial of the extremal of degree $p$ is primitive or $P_1(x^p)=x^p-2$. Table \ref{table:theta} of Rhin and Wu suggests that $P_4(x)=x^4+x^3+1$ and $P_8(x)=x^8+x^7+x^4-x^2+1$ are the only primitive minimal polynomials of an extremal of a composite degree.

\begin{conjecture}\label{sec:compositeConj}
Let $d$ be a composite natural number and let $p_1,p_2,\ldots,p_k$ be odd prime numbers which are divisors of $d$ or $p_i=t$, $i=1,2,\ldots,k$ where $t$ is defined on the following manner:

$t:=1$;

if $4 \mid d$ and $8 \nmid d$ then $t:=4$;

if $8 \mid d$ then $t:=8$.

\noindent Let $\mathrm{m}^{p_1}(p_1)<\mathrm{m}^{p_2}(p_2)<\cdots<\mathrm{m}^{p_k}(p_k)\le 2$. If $P_{p_i}(x)$ is the minimal polynomial of the extremal of degree $p_i$ then $P_d(x)=P_{p_1}(x^{d/p_1})$ and $\mathrm{m}(d)=\house{P_{p_1}(x^{d/p_1})}$.
\end{conjecture}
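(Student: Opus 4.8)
The plan is to reduce Conjecture~\ref{sec:compositeConj} to Corollary~\ref{sec:composite} together with two structural facts about extremals, and then to isolate the genuinely open input. Throughout I use $\house{P(x^k)}=\sqrt[k]{\house{P(x)}}$, so that substituting $x^{d/b}$ into the extremal $P_b$ of a proper divisor $b\mid d$ produces a degree-$d$ polynomial of house $\bigl(\mathrm{m}^{b}(b)\bigr)^{1/d}$; minimizing the house over nonprimitive degree-$d$ polynomials is therefore the same as minimizing $\mathrm{m}^{b}(b)$ over those proper divisors $b$ for which $P_b$ is primitive. (Writing a nonprimitive polynomial via its primitive core $R(x^m)$, one checks that the $d$-th-power-normalized quantity $\mathrm{m}^{b}(b)$ is unchanged when passing to that core, so the restriction to primitive bases is without loss of generality.)

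First I would grant the empirical input suggested by the Rhin--Wu table: for composite $d$ the extremal polynomial $P_d$ is nonprimitive. Given this, $P_d(x)=Q(x^k)$ for some $k\ge 2$ and some primitive $Q$ of degree $d/k$; extremality of $P_d$ forces $Q$ to have the least house among primitive polynomials of degree $d/k$ whose substitution remains irreducible, so by the reformulation above the problem is exactly the minimization handled by Corollary~\ref{sec:composite}, whose winner is $P_{b_1}(x^{d/b_1})$ for the divisor $b_1$ achieving $\min_b\mathrm{m}^{b}(b)$.

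Next I would identify the proper divisors $b\mid d$ carrying a primitive extremal as precisely $\{\,p : p \text{ an odd prime},\ p\mid d\,\}\cup\{t\}$. For an odd prime $p\mid d$ the extremal $P_p$ is primitive unless it equals $x^p-2=P_1(x^p)$, and in that exceptional case $p$ is subsumed by the base $b=1$ and is ruled out by the strict chain $\mathrm{m}^{p_1}(p_1)<\cdots<\mathrm{m}^{p_k}(p_k)$ in the hypothesis. The $2$-part is handled by the two exceptional primitive composite-degree extremals $P_4=x^4+x^3+1$ and $P_8=x^8+x^7+x^4-x^2+1$, together with the $x\mapsto x^2$ reduction in the spirit of Conjecture~\ref{sec:Sqrt}: the extremal of degree $2^j$ with $j\ge 3$ is $P_8(x^{2^{j-3}})$, hence nonprimitive, so no power of two exceeding $8$ yields a new primitive base and the relevant $2$-power base is $t=8$ if $8\mid d$, $t=4$ if $4\mid d$ but $8\nmid d$, and $t=1$ otherwise; every remaining composite divisor has a nonprimitive extremal by the crux and so contributes nothing. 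With the bases pinned down, the hypothesis orders them so that $p_1$ realizes $\min_b\mathrm{m}^{b}(b)$, and Corollary~\ref{sec:composite} yields that $P_{p_1}(x^{d/p_1})$ has the smallest house among all nonprimitive polynomials of degree $d$; since $P_d$ is itself nonprimitive, $P_d(x)=P_{p_1}(x^{d/p_1})$ and $\mathrm{m}(d)=\house{P_{p_1}(x^{d/p_1})}$.

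The main obstacle is that the two structural facts I granted are themselves the open heart of the problem: that every composite degree other than $4$ and $8$ has a nonprimitive extremal, and that $P_4,P_8$ are the only primitive extremals of composite degree. Neither is available unconditionally---they are exactly what the Rhin--Wu data and Conjecture~\ref{sec:Sqrt} record---so the argument is best read as a reduction of Conjecture~\ref{sec:compositeConj} to these facts rather than an unconditional proof. A complete proof would require an effective lower bound for the house of a primitive polynomial of composite degree $d$ strong enough to exceed $\bigl(\mathrm{m}^{b_1}(b_1)\bigr)^{1/d}$ (and a check that each $P_{p_1}(x^{d/p_1})$ is irreducible of degree $d$), precisely the type of Schinzel--Zassenhaus estimate that the present computations only suggest.
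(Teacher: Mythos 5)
The statement you were asked about is a \emph{conjecture} in the paper: the author supplies no proof at all, only the supporting apparatus of Lemma~\ref{sec:PolProd} and Corollary~\ref{sec:composite} together with the empirical evidence of the Rhin--Wu table (that $P_d$ is nonprimitive for composite $d\ge 9$, with $P_4=x^4+x^3+1$ and $P_8=x^8+x^7+x^4-x^2+1$ the only primitive composite-degree extremals). Your proposal reconstructs exactly that apparatus: the powerhouse normalization $\house{P(x^k)}=\sqrt[k]{\house{P(x)}}$, the reduction of the nonprimitive minimization to Corollary~\ref{sec:composite}, and the identification of the admissible bases as the odd primes dividing $d$ together with $t\in\{1,4,8\}$ --- and, crucially, you correctly isolate the two granted structural facts (every composite degree other than $4$ and $8$ has a nonprimitive extremal; these two are the only primitive exceptions) as the genuinely open content rather than claiming an unconditional proof. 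This matches the paper's own reasoning as far as it exists; your conclusion that the argument is a reduction of Conjecture~\ref{sec:compositeConj} to those open inputs, plus irreducibility of $P_{p_1}(x^{d/p_1})$, is the correct assessment of the state of affairs, and nothing in your conditional steps is wrong.
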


If the previous conjecture is true then we just need to determine $\mathrm{m}(d)$ for $d$ is a prime. If $d$ is a composite number we can easily calculate $\mathrm{m}(d)=\mathrm{m}^{p_1/d}(p_1)$ where $p_1$ is determined as in the Conjecture \ref{sec:compositeConj}.

\begin{lemma}\label{sec:prime5Mod6}
Let $d\ge 5$ be a natural number such that $d\equiv 5\;(\mathrm{mod}\;6)$. Let $P_d(x)$ be defined
\begin{equation}\label{prime5Mod6:1}
P_d(x):=(x^{d+2}-x^2-1)/(x^2-x+1).
\end{equation}
Then $P_d(x)$ is a polynomial which has a real root $1<a_d<\sqrt[d]{2}$ such that
$$\lim_{d\rightarrow\infty}a_d^d= 2.$$
\end{lemma}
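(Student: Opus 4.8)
The plan is to proceed in three stages: first confirm that $P_d$ is genuinely a polynomial, then locate a real root exceeding $1$, and finally extract both the upper bound and the limit from the defining equation itself.

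First I would verify the divisibility. The denominator $x^2-x+1$ has as its roots the two primitive sixth roots of unity $\zeta$, which satisfy $\zeta^2=\zeta-1$ and have multiplicative order $6$. Writing $f(x)=x^{d+2}-x^2-1$, I would check that $f(\zeta)=0$: since $\zeta^2+1=\zeta$ and $\zeta^{d+2}=\zeta^{d+1}\cdot\zeta$, the hypothesis $d\equiv 5\pmod 6$ gives $6\mid d+1$, hence $\zeta^{d+1}=1$ and $\zeta^{d+2}=\zeta=\zeta^2+1$, so $f(\zeta)=0$. Thus both (distinct) roots of $x^2-x+1$ are roots of $f$, so $x^2-x+1$ divides $f$ and $P_d$ is a polynomial. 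This congruence is exactly the number-theoretic input that makes the construction work, and it is the one place where the hypothesis is essential.

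Next I would produce the real root. The key observation is that $x^2-x+1=(x-\tfrac12)^2+\tfrac34>0$ for every real $x$, so on the reals the sign of $P_d$ agrees with the sign of $f$, and the real roots of $P_d$ are exactly the real roots of $f$. Evaluating, $f(1)=-1<0$ while $f(x)\to+\infty$ as $x\to\infty$, so by the intermediate value theorem $f$, and hence $P_d$, has a real root $a_d>1$. (One checks that $f$ is strictly increasing on $[1,\infty)$, so this root is unique there, though uniqueness is not needed.)

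Finally, the limit and the remaining bound follow from the defining equation. From $f(a_d)=0$ I obtain $a_d^{d+2}=a_d^2+1$, and dividing by $a_d^2$ yields the identity
\begin{equation*}
a_d^d = 1 + \frac{1}{a_d^2}.
\end{equation*}
Since $a_d>1$, this immediately gives $1<a_d^d<2$, equivalently $1<a_d<\sqrt[d]{2}$, establishing the stated bracketing. Because $\sqrt[d]{2}\to 1$ as $d\to\infty$, the squeeze forces $a_d\to 1$, whence $a_d^{-2}\to 1$ and therefore $a_d^d=1+a_d^{-2}\to 2$. There is no genuine analytic obstacle here: the whole argument hinges on recognizing the identity $a_d^d=1+a_d^{-2}$, which simultaneously delivers the upper bound $a_d<\sqrt[d]{2}$ and pins down the limiting value $2$.
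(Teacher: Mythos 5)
Your proof is correct, but it reaches the two structural facts (polynomiality and the upper bound) by a different route than the paper. The paper establishes, by induction, the closed form $P_d(x)=(x^5+x^4-x^2-x)(x^{d-5}+x^{d-11}+\cdots+1)-1$, which simultaneously certifies that $P_d$ is a polynomial with integer coefficients and yields $P_d(1)=-1$; it then gets the upper endpoint by direct evaluation, computing $P_d(\sqrt[d]{2})=\bigl(2(\sqrt[d]{2})^2-(\sqrt[d]{2})^2-1\bigr)/\bigl((\sqrt[d]{2})^2-\sqrt[d]{2}+1\bigr)>0$, so the intermediate value theorem places $a_d$ in $(1,\sqrt[d]{2})$ before the identity $a_d^d=1+1/a_d^2$ is invoked for the limit. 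You instead verify divisibility by checking that the primitive sixth roots of unity $\zeta$ satisfy $\zeta^{d+2}=\zeta=\zeta^2+1$ when $6\mid d+1$, locate a root $a_d>1$ of the numerator (using positivity of $x^2-x+1$ on $\mathbb{R}$), and then extract the bound $a_d<\sqrt[d]{2}$ as a corollary of the identity $a_d^d=1+a_d^{-2}<2$ rather than by endpoint evaluation. Your route has two genuine advantages: the roots-of-unity check isolates exactly where the hypothesis $d\equiv 5\pmod 6$ enters (in the paper this is buried in the inductive factorization, whose exponent pattern $d-5,d-11,\ldots,0$ silently encodes the same congruence), and deriving the upper bound from the identity makes the evaluation at $\sqrt[d]{2}$ unnecessary. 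What the paper's factorization buys in exchange is an explicit integral expression for $P_d$, which makes the coefficient structure of the family visible at a glance. The final limit argument — squeeze $a_d\to 1$ from $1<a_d<\sqrt[d]{2}$, then $a_d^d=1+a_d^{-2}\to 2$ — is identical in both.
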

\begin{proof}
It can be proved by the mathematical induction that $$P_d(x)=(x^5+x^4-x^2-x)(x^{d-5}+x^{d-11}+\cdots+1)-1.$$
It is obvious that $P_d(1)=-1$ and $P_d(\sqrt[d]{2})=(2(\sqrt[d]{2})^2-(\sqrt[d]{2})^2-1)/((\sqrt[d]{2})^2-\sqrt[d]{2}+1)>0$. Hence, there is a real root $a_d\in(1,\sqrt[d]{2})$ of $P_d(x)$. It follows from $a_d^{d+2}-a_d^2-1=0$ that $a_d^{d}=1+1/a_d^2$. Since $a_d$ tends to $1$ we conclude that $a_d^{d}$ tends to $1+1/1^2=2$ when $d$ tends to $\infty$.
\end{proof}

It is proved in \cite{B} that $a_d$ is greater than any of its conjugates. Hence $a_d=\house{P_d(x)}$.
Table \ref{table:theta} of Rhin and Wu for $d=17$ and $d=23$ and the last two lemmata suggest the following
\begin{conjecture}\label{sec:primeConj5Mod6}
The extremal $\alpha$ of prime degree $d\ge 17$ such that $d\equiv 5\;(\mathrm{mod}\;6)$ has the minimal
polynomial $P_d(x)$ defined in Lemma \ref{sec:prime5Mod6}.
\end{conjecture}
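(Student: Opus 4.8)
The plan is to prove $\mathrm{m}(d)=a_d$, where $a_d=\house{P_d(x)}$, by establishing the two inequalities $\mathrm{m}(d)\le a_d$ and $\mathrm{m}(d)\ge a_d$ separately. For the upper bound I first confirm that $P_d(x)$ is genuinely the minimal polynomial of an algebraic integer of degree $d$. For $d\equiv 5\pmod 6$ one has $d+2\equiv 1\pmod 6$, so a primitive sixth root of unity $\zeta$ (a root of $x^2-x+1$) satisfies $\zeta^{d+2}=\zeta$ and $\zeta^2=\zeta-1$, whence $\zeta^{d+2}-\zeta^2-1=0$; thus $x^2-x+1$ divides $x^{d+2}-x^2-1$ and $P_d$ is a monic integer polynomial of degree $d$. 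By Lemma~\ref{sec:prime5Mod6} its dominant real root $a_d$ lies in $(1,\sqrt[d]{2})$, and since it is shown in \cite{B} that $a_d$ exceeds every conjugate, $\house{P_d(x)}=a_d$. After checking that $P_d$ is irreducible and not cyclotomic, this gives $\mathrm{m}(d)\le a_d<2^{1/d}$, i.e. $\mathrm{m}^d(d)<2$.

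The substantive direction is $\mathrm{m}(d)\ge a_d$. Let $\alpha$ be any extremal of degree $d$, so $H:=\house{\alpha}\le a_d<2^{1/d}$ and hence $M(\alpha)\le H^d<2$. Since $|Q(0)|=\prod_i|\alpha_i|\le M(\alpha)<2$ is a positive integer, $\alpha$ is a unit and $\prod_i|\alpha_i|=1$; combined with $|\alpha_i|\le H$ this forces every conjugate into the thin annulus $H^{-(d-1)}<|\alpha_i|\le H$ about the unit circle. Writing the minimal polynomial as $Q(x)=x^d+\sum_k a_{d-k}x^{d-k}$, each coefficient satisfies $|a_{d-k}|\le\binom{d}{k}M(\alpha)<2\binom{d}{k}$, which pins the few outermost coefficients into a small explicit range. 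I would then invoke Lemmas~\ref{sec:PolType}, \ref{sec:PolType2} and \ref{sec:PolType3} to discard every admissible leading pattern that forces a real root exceeding $3/2$ or $2$: any such $\alpha$ would satisfy $\house{\alpha}\ge 3/2>a_d$, contradicting extremality. This removes the bulk of the candidate polynomials.

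The main obstacle is the control of the interior coefficients $a_{d-k}$ with $k$ near $d/2$, where the bound $\binom{d}{k}M(\alpha)$ is far too large to force $|a_{d-k}|$ small; conjugates clustered near the unit circle can in principle produce large middle coefficients, and supplying an absolute bound here is exactly the difficulty at the heart of the Schinzel--Zassenhaus problem, which is why the statement is offered only as a conjecture. To close the gap I would try to exploit the unit condition and the near-circle distribution together with the fixed cyclotomic factor $x^2-x+1$: one seeks a resultant or interlacing comparison proving that, among monic integer polynomials of degree $d$ all of whose roots satisfy $\prod_i|\alpha_i|=1$ with house below $2^{1/d}$, the unique minimiser is $P_d$. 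A promising organising principle is induction in steps of six, mirroring the induction of Lemma~\ref{sec:prime5Mod6} and the identity $P_d(x)=(x^5+x^4-x^2-x)(x^{d-5}+x^{d-11}+\cdots+1)-1$. Realistically a complete unconditional proof is beyond current techniques; the attainable deliverable is the upper bound above, the pruning of candidates via the three lemmata, and a conditional reduction of $\mathrm{m}(d)\ge a_d$ to a finite verification once the interior coefficients are bounded.
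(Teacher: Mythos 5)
You have been asked to prove a statement that the paper itself presents only as a conjecture: the paper contains no proof of it, merely the supporting evidence of the Rhin--Wu table for $d=17$ and $d=23$ together with Lemma~\ref{sec:prime5Mod6}, which constructs the candidate $P_d$ and shows $a_d^d\to 2$. So there is no paper proof to compare yours against, and the honest part of your proposal is exactly right: only the upper-bound half is within reach. Your verification that $x^2-x+1$ divides $x^{d+2}-x^2-1$ when $d\equiv 5\pmod 6$ is correct, and combined with Boyd's result \cite{B} that $a_d$ dominates its conjugates it gives $\mathrm{m}(d)\le a_d<2^{1/d}$ --- but only modulo irreducibility of $P_d$, which you assert rather than prove. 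That step cannot be waved away (a proper irreducible factor would have degree $<d$ and say nothing about $\mathrm{m}(d)$); it does hold, by the Selmer--Ljunggren results on trinomials $x^n-x^m-1$, whose noncyclotomic part is irreducible, and you should cite or prove this.

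The lower-bound plan contains a step that fails outright: invoking Lemmas~\ref{sec:PolType}, \ref{sec:PolType2} and \ref{sec:PolType3} to prune candidates. Those lemmata apply only to \emph{reciprocal} polynomials of the shapes \eqref{Pol:1}--\eqref{Pol:3}, but an irreducible reciprocal polynomial of degree $>1$ must have even degree (reciprocity of odd degree forces $\pm 1$ as a root), so no minimal polynomial of an algebraic integer of odd prime degree $d$ is ever of these types --- the pruning is vacuous here; in the paper these lemmata serve the search over reciprocal polynomials behind Table~\ref{table:nu}, not the nonreciprocal prime-degree problem. Beyond that, the remaining gap is not a ``finite verification'': proving $\mathrm{m}(d)\ge a_d$, i.e.\ $\mathrm{m}^d(d)\ge a_d^d\to 2$, is far stronger than the Schinzel--Zassenhaus conjecture itself (compare Matveev's bound \eqref{Mat:1}, which decays like $\log d/d^2$ in the exponent), and your binomial estimate $|a_{d-k}|\le\binom{d}{k}M(\alpha)$ gives no usable control of the middle coefficients, as you concede. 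Finally, even granting the inequality, the conjecture asserts that $P_d$ is \emph{the} minimal polynomial of the extremal, a uniqueness claim for which you offer only the hope of a resultant or interlacing comparison. In short: your attempt correctly delivers $\mathrm{m}(d)\le a_d$ (once irreducibility is supplied), but the reverse inequality and uniqueness remain entirely open, which is precisely why the statement stands as a conjecture in the paper.
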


Our attempt to generalize the minimal polynomial $(x^{22}-x^{11}-x+1)/((x-1)(x^2+1))$ of extremal $\alpha$ of degree $d= 19$ to prime degree $d\ge 19$ such that $d\equiv 7\;(\mathrm{mod}\;12)$ failed because the house of
$(x^{d+3}-x^{(d+3)/2}-x+1)/((x-1)(x^2+1))$ is greater than $\sqrt[d]{2}$ when $d\ge 19$. Therefore the next question arises: is there any prime $d\ne 2$ such that $\mathrm{m}^d(d)=2$? And, if there is such $d$ then whether $\mathrm{m}^{d^2}(d^2)<2$? These questions are closely related with the next

\begin{lemma}\label{sec:mddBounded}
The sequence $(\mathrm{m}^d(d))_{d\ge 1}$ is bounded and $2$ is an upper bound.
\end{lemma}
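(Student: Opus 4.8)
The plan is to exhibit, for every $d\ge 1$, a single admissible competitor whose house does not exceed $2^{1/d}$. Since $\mathrm{m}(d)$ is by definition the minimum of the houses of all degree-$d$ algebraic integers that are not roots of unity, producing one such $\alpha$ with $\house{\alpha}\le 2^{1/d}$ immediately yields $\mathrm{m}(d)\le 2^{1/d}$, and hence $\mathrm{m}^d(d)=\mathrm{m}(d)^d\le 2$. Thus the whole lemma reduces to finding, for each $d$, an algebraic integer of degree exactly $d$, not a root of unity, with house at most $2^{1/d}$.

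The natural candidate is $\alpha=\sqrt[d]{2}$, the real root of $x^d-2$; note that the authors have already singled this polynomial out as $P_1(x^d)$. First I would check that $x^d-2$ is irreducible over $\mathbb{Q}$, which is immediate from Eisenstein's criterion at the prime $2$, so that $\alpha$ has degree exactly $d$ and $x^d-2$ is its minimal polynomial. Next, $\alpha$ is not a root of unity, because its modulus is $2^{1/d}\ne 1$, so it is indeed an admissible competitor in the definition of $\mathrm{m}(d)$. Finally, the conjugates of $\alpha$ are the numbers $2^{1/d}\zeta$ as $\zeta$ ranges over the $d$-th roots of unity, all of modulus $2^{1/d}$, whence $\house{\alpha}=2^{1/d}$.

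Putting these observations together gives $\mathrm{m}(d)\le\house{\alpha}=2^{1/d}$, and raising to the $d$-th power yields the claimed bound $\mathrm{m}^d(d)\le 2$, which establishes both the boundedness of the sequence and the value $2$ as an explicit upper bound. There is no serious obstacle in this argument; the only point requiring a moment's care is the boundary case $d=1$ (together with the convention of whether $0$ is admitted as a degree-$1$ algebraic integer), where the construction gives $\house{\alpha}=2$ and the bound holds with equality, consistent with $P_1(x)=x-2$. In the $d$-th-power scale the bound is thus attained along the family $x^d-2$, so $2$ is the best constant of this form.
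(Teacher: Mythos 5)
Your proof is correct and follows essentially the same route as the paper, which likewise bounds $\mathrm{m}(d)$ by $\house{x^d-2}=\sqrt[d]{2}$ and raises to the $d$-th power; you merely make explicit the standard details the paper leaves implicit (irreducibility of $x^d-2$ via Eisenstein at $2$, and that $\sqrt[d]{2}$ is not a root of unity since its modulus is not $1$).
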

\begin{proof}
If $\mathrm{m}(d)$ is attained for $\alpha_d$ then $\mathrm{m}(d)=\house{\alpha_d}\le \house{x^d-2}= \sqrt[d]{2}$. The claim follows straightforwardly if we raise both sides of the inequality to the power $d$.
\end{proof}

\begin{corollary}\label{sec:mddAccumulation}
The sequence $(\mathrm{m}^d(d))_{d\ge 1}$ has an accumulation point in $[1,2]$.
\end{corollary}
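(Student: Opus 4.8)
The plan is to deduce this immediately from the preceding Lemma~\ref{sec:mddBounded} together with the classical Bolzano--Weierstrass theorem. The only content beyond that lemma is the observation that the sequence is also bounded below and stays inside a \emph{closed} interval, so that any accumulation point is trapped in $[1,2]$.

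First I would record the two-sided bound. Lemma~\ref{sec:mddBounded} already gives $\mathrm{m}^d(d)\le 2$ for every $d$. For the lower bound, recall that $\mathrm{m}(d)$ is the house of an extremal $\alpha_d$ that is by definition not a root of unity; by Kronecker's theorem $\house{\alpha_d}>1$, so $\mathrm{m}(d)>1$ and therefore $\mathrm{m}^d(d)>1$. Hence
\[
1<\mathrm{m}^d(d)\le 2\qquad\text{for all }d\ge 1,
\]
so the entire sequence $(\mathrm{m}^d(d))_{d\ge 1}$ lies in the compact interval $[1,2]$.

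Next I would invoke Bolzano--Weierstrass: any bounded sequence of real numbers possesses a convergent subsequence, and the limit of such a subsequence is by definition an accumulation point of the original sequence. Since $(\mathrm{m}^d(d))_{d\ge 1}$ is bounded, such an accumulation point exists. Finally, because every term lies in the closed set $[1,2]$ and a limit of points of a closed set again belongs to that set, the accumulation point necessarily lies in $[1,2]$, which is exactly the claim.

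There is essentially no obstacle here: the statement is a routine consequence of compactness once the uniform bounds are in place. The only point requiring a word of care is that one must use the \emph{closedness} of $[1,2]$ to conclude that the accumulation point does not escape to the boundary value in a forbidden way; this is automatic since $[1,2]$ is compact. Thus the corollary follows straightforwardly.
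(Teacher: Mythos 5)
Your proof is correct and takes essentially the same route as the paper, which likewise deduces the corollary directly from Lemma~\ref{sec:mddBounded} and the Bolzano--Weierstrass theorem. You merely make explicit two details the paper leaves implicit: the lower bound $\mathrm{m}^d(d)>1$ (via Kronecker's theorem) and the use of the closedness of $[1,2]$ to locate the accumulation point.
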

\begin{proof}
The claim is direct consequence of lemma \ref{sec:mddBounded} and the Bolzano-Weierstrass Theorem.
\end{proof}

The last few lemmata and corollaries show that $\house{\alpha}^d$ can play an important role
in the research of algebraic integers analogously to the Mahler measure. The obvious benefit is that we can exclude nonprimitive polynomials because $\house{\alpha}^d=\house{\sqrt[k]{\alpha}}^{kd}$. Also it is interesting to ask the Lehmer question whether there exists a positive number $\epsilon$  such that
if $\alpha$ is neither 0 nor a root of unity, then $\house{\alpha}^d \ge 1+\epsilon$.
Is there an accumulation point less than $2$ of the sequence $(\mathrm{m}^d(d))_{d\ge 1}$ is another interesting question. We suggest $\house{\alpha}^d$ to be called the \textbf{powerhouse} of $\alpha$ and denoted with $\mathrm{ph}(\alpha)$.


\begin{lemma}\label{sec:PolProdRec}
Let $\mathrm{mr}(d)$ is attained for $\alpha_d$ with minimal reciprocal polynomial $R_d(x)$. Let $k_1$, $k_2$ be integers and $d_1$, $d_2$ be even integers such that $k_1d_1=k_2d_2$. If $\mathrm{mr}^{d_1}(d_1)<\mathrm{mr}^{d_2}(d_2)$ then the house of $R_{d_1}(x^{k_1})$ is less than the house of $R_{d_2}(x^{k_2})$.
\end{lemma}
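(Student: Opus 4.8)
The plan is to imitate the proof of Lemma~\ref{sec:PolProd}, the only new ingredient being the exponent bookkeeping forced by the constraint $k_1d_1=k_2d_2$. Throughout I would use the elementary identity $\house{R(x^{k})}=\sqrt[k]{\house{R}}$ recorded in the introduction, which holds for any polynomial and in particular keeps $R_{d_1}(x^{k_1})$ and $R_{d_2}(x^{k_2})$ reciprocal because $d_1,d_2$ are even (substituting $x\mapsto x^{k}$ sends a reciprocal polynomial to a reciprocal one).

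First I would set $N:=k_1d_1=k_2d_2$, the common degree of the two polynomials $R_{d_1}(x^{k_1})$ and $R_{d_2}(x^{k_2})$. Rewriting the hypothesis $\mathrm{mr}^{d_1}(d_1)<\mathrm{mr}^{d_2}(d_2)$ as $\mathrm{mr}(d_1)^{d_1}<\mathrm{mr}(d_2)^{d_2}$, and noting that both sides exceed $1$ (each $\mathrm{mr}>1$ by Kronecker's theorem), I raise the inequality to the power $1/N$. Since $t\mapsto t^{1/N}$ is strictly increasing on the positive reals, the strict inequality is preserved, giving
\begin{equation*}
\mathrm{mr}(d_1)^{d_1/N}<\mathrm{mr}(d_2)^{d_2/N}.
\end{equation*}

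Next I would simplify the exponents via $d_i/N=d_i/(k_id_i)=1/k_i$, which turns the last display into $\mathrm{mr}(d_1)^{1/k_1}<\mathrm{mr}(d_2)^{1/k_2}$. Finally, applying the house-identity to each side yields $\house{R_{d_i}(x^{k_i})}=\house{R_{d_i}}^{1/k_i}=\mathrm{mr}(d_i)^{1/k_i}$, so the two sides are precisely the houses in question and the claim follows. I expect no genuine obstacle here: the argument is essentially a one-line manipulation of exponents, and the only points requiring care are checking that $d_i/N=1/k_i$ follows from $k_id_i=N$ and confirming that the house-identity applies (which it does, since the $x\mapsto x^{k_i}$ substitution preserves reciprocity, matching the hypothesis that $d_1,d_2$ are even).
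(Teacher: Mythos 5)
Your proposal is correct and follows essentially the same route as the paper's proof: both rewrite $\mathrm{mr}^{d_1}(d_1)<\mathrm{mr}^{d_2}(d_2)$ as $\mathrm{mr}^{1/k_1}(d_1)<\mathrm{mr}^{1/k_2}(d_2)$ (you make the intermediate step of raising to the power $1/N$ with $N=k_1d_1=k_2d_2$ explicit, which the paper leaves implicit) and then identify each side with the house of $R_{d_i}(x^{k_i})$ via $\house{R(x^{k})}=\sqrt[k]{\house{R}}$. Your extra remarks on monotonicity and on the substitution preserving reciprocity are harmless elaborations of the same one-line argument.
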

\begin{proof}
If $\mathrm{mr}^{d_1}(d_1)<\mathrm{mr}^{d_2}(d_2)$ then $\mathrm{mr}^{1/k_1}(d_1)<\mathrm{mr}^{1/k_2}(d_2)$. It remains to recall that the house of $R_{d_1}(x^{k_1})$ is equal to $\mathrm{mr}^{1/k_1}(d_1)$ and the house of $R_{d_2}(x^{k_2})$ is equal to $\mathrm{mr}^{1/k_2}(d_2)$.
\end{proof}

\begin{corollary}\label{sec:compositeRec}
Let $d/2$ be a composite natural number. Let $\mathrm{mr}(b_i)$ is attained for a reciprocal $\alpha_{b_i}$ with minimal polynomial $R_{b_i}(x)$ where $1 \le b_i <d$, are natural numbers which are divisors of $d$ such that $R_{b_i}(x)$ is a primitive polynomial, $i=1,2,\ldots,k$.
If $\mathrm{mr}^{b_1}(b_1)<\mathrm{mr}^{b_2}(b_2)<\cdots<\mathrm{mr}^{b_k}(b_k)$ then the nonprimitive polynomial $P_{b_1}(x^{d/b_1})$ has the house which is less than the house of any other nonprimitive polynomial of degree $d$.
\end{corollary}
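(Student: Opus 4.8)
The plan is to derive the statement from the pairwise comparison established in Lemma~\ref{sec:PolProdRec}, exactly as Corollary~\ref{sec:composite} is derived from Lemma~\ref{sec:PolProd}, with one additional reduction showing that every nonprimitive competitor can be brought into the form covered by that lemma. First I would record the identity
\[
\house{R_{b_i}(x^{d/b_i})}=\sqrt[d/b_i]{\house{R_{b_i}}}=\mathrm{mr}(b_i)^{b_i/d}=\bigl(\mathrm{mr}^{b_i}(b_i)\bigr)^{1/d},
\]
which combines $\house{P(x^k)}=\sqrt[k]{\house{P(x)}}$ with the assumption that $\mathrm{mr}(b_i)$ is attained by $R_{b_i}$. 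Because $t\mapsto t^{1/d}$ is strictly increasing, the hypothesis $\mathrm{mr}^{b_1}(b_1)<\cdots<\mathrm{mr}^{b_k}(b_k)$ yields $\house{R_{b_1}(x^{d/b_1})}<\house{R_{b_2}(x^{d/b_2})}<\cdots<\house{R_{b_k}(x^{d/b_k})}$; this is precisely Lemma~\ref{sec:PolProdRec} applied to each pair, with $d_1=b_i$, $d_2=b_j$, $k_1=d/b_i$, $k_2=d/b_j$, so that $k_1d_1=k_2d_2=d$.

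It then remains to compare $R_{b_1}(x^{d/b_1})$ with an \emph{arbitrary} nonprimitive polynomial $P$ of degree $d$, and here I would argue as follows. Discarding the trivial case in which $P$ is a product of cyclotomic factors (so $\house{P}=1$ and $P$ is excluded from the competition of non-roots-of-unity), write $P(x)=Q(x^k)$ with $k\ge 2$ chosen maximal, which forces $Q$ to be primitive. A short substitution $y=x^k$ in $x^dP(1/x)=\pm P(x)$ shows $Q$ is again reciprocal, hence of even degree $b:=d/k$, a divisor of $d$ with $1\le b<d$. Since any reciprocal non-root-of-unity of degree $b$ has house at least $\mathrm{mr}(b)$, we obtain
\[
\house{P}=\house{Q}^{1/k}\ge\mathrm{mr}(b)^{1/k}=\bigl(\mathrm{mr}^{b}(b)\bigr)^{1/d}.
\]
Finally I would invoke the minimality of $\mathrm{mr}^{b_1}(b_1)$ to conclude $\bigl(\mathrm{mr}^{b}(b)\bigr)^{1/d}\ge\bigl(\mathrm{mr}^{b_1}(b_1)\bigr)^{1/d}=\house{R_{b_1}(x^{d/b_1})}$, the inequalities being strict whenever $P$ differs from $R_{b_1}(x^{d/b_1})$ by the strict ordering of the powerhouses.

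The one step that is not purely cosmetic is ensuring that the divisor $b=d/k$ produced above is genuinely among the listed $b_i$, i.e. that its extremal reciprocal polynomial $R_b$ is primitive. If $R_b$ were itself nonprimitive, say $R_b(x)=T(x^j)$ with $j\ge 2$ and $\deg T=b/j$, then $\mathrm{mr}^{b/j}(b/j)\le\house{T}^{b/j}=\house{R_b}^{b}=\mathrm{mr}^{b}(b)$, so passing to the strictly smaller divisor $b/j$ only decreases the powerhouse $\mathrm{mr}^{b}(b)$; iterating this descent terminates at a divisor whose extremal polynomial is primitive, which is one of the $b_i$, and the displayed inequality is preserved throughout. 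This descent (together with the elementary parity remark that irreducible reciprocal polynomials of degree exceeding $1$ have even degree) is the only place requiring care; granting it, the claim follows straightforwardly from Lemma~\ref{sec:PolProdRec}, in parallel with Corollary~\ref{sec:composite}.
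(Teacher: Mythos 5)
Your proof is correct and takes essentially the same route as the paper, whose entire proof is the single line ``The claim follows straightforwardly from Lemma~\ref{sec:PolProdRec}'': your pairwise comparison via $\house{R_{b_i}(x^{d/b_i})}=\bigl(\mathrm{mr}^{b_i}(b_i)\bigr)^{1/d}$ is exactly what that lemma encodes with $k_1d_1=k_2d_2=d$. The additional material you supply --- factoring an arbitrary nonprimitive competitor as $Q(x^k)$ with $Q$ primitive and reciprocal, and the descent showing the resulting divisor may be replaced by one of the listed $b_i$ --- is precisely what the paper compresses into ``straightforwardly,'' so you have filled in its omitted details rather than deviated from its argument.
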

\begin{proof}
The claim follows straightforwardly from Lemma \ref{sec:PolProdRec}.
\end{proof}

If $p$ is a prime number then it is obvious that the minimal polynomial of the extremal reciprocal of degree $2p$ is primitive or $R_2(x^p)=x^{2p}+3x^p+1$. Table \ref{table:nu} suggests that $R_8(x)
$, $R_{12}(x)
$, and $R_{18}
$ are the only primitive minimal polynomials of an extremal reciprocal of a degree $d$ such that $d/2$ is a composite number.

\begin{conjecture}\label{sec:compositeConjRec}
Let $d$ be a composite natural number and let $p_1,p_2,\ldots,p_k$ be odd prime numbers which are divisors of $d$ or $p_i\in\{s,t\}$, $i=1,2,\ldots,k$ where $s=9$ and $t$ is defined on the following manner:

$t:=1$;

if $8 \mid d$ and $12 \nmid d$ then $t=4$;

if $12 \mid d$ then $t=6$.

Let $\mathrm{mr}^{2p_1}(2p_1)<\mathrm{mr}^{2p_2}(2p_2)<\cdots<\mathrm{mr}^{2p_k}(2p_k)$. 
If $R_{2p_i}(x)$ is the minimal polynomial of the extremal of degree $2p_i$ then $R_{d}(x)=R_{2p_1}(x^{d/(2p_1)})$ and $\mathrm{mr}(d)=\house{R_{2p_1}(x^{d/(2p_1)})}$.
\end{conjecture}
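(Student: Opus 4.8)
The plan is to split the conclusion into an upper and a lower bound on $\mathrm{mr}(d)$, with the lower bound further divided between nonprimitive and primitive competitors. For the upper bound I would first verify that $R_{2p_1}(x^{d/(2p_1)})$ is irreducible, so that its root is a reciprocal algebraic integer of degree exactly $d$ and not a root of unity; granting this, the identity $\house{P(x^k)}=\sqrt[k]{\house{P(x)}}$ gives its house as $(\mathrm{mr}^{2p_1}(2p_1))^{1/d}$, which is one of the values over which $\mathrm{mr}(d)$ is the minimum, whence $\mathrm{mr}(d)\le(\mathrm{mr}^{2p_1}(2p_1))^{1/d}$. The polynomial identity $R_d(x)=R_{2p_1}(x^{d/(2p_1)})$ will then follow once this value is shown to be attained by no other reciprocal polynomial of degree $d$.

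For the lower bound over nonprimitive polynomials I would rely on Corollary \ref{sec:compositeRec}, which shows that among the nonprimitive reciprocal polynomials of degree $d$ assembled from primitive extremal blocks $R_{b_i}$ at divisors $b_i\mid d$, the block of least powerhouse yields the least house; explicitly, $R_{b}(x^{d/b})$ has house $(\mathrm{mr}^{b}(b))^{1/d}$, so this is governed by $\min_{b\mid d,\,b<d}\mathrm{mr}^{b}(b)$. Since the powerhouse is invariant under $\alpha\mapsto\sqrt[k]{\alpha}$, the minimising block may be taken with $R_b$ primitive. The admissible blocks are then exactly the degrees $2p_i$ recorded in the hypothesis through the odd prime divisors of $d$ and the exceptional values $s=9$ and $t$, these being---by the evidence of Table \ref{table:nu}---the only degrees carrying a primitive extremal reciprocal polynomial (the exceptions $R_8,R_{12},R_{18}$ accounting for $t\in\{4,6\}$ and $s=9$). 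The ordering hypothesis then selects $b=2p_1$, so every nonprimitive competitor has house at least $(\mathrm{mr}^{2p_1}(2p_1))^{1/d}$.

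The decisive step is the lower bound over primitive polynomials: that no primitive reciprocal polynomial of degree $d$ has house below $(\mathrm{mr}^{2p_1}(2p_1))^{1/d}$. I would begin by discarding broad families using Lemmata \ref{sec:PolType}, \ref{sec:PolType2} and \ref{sec:PolType3}: any primitive candidate whose leading and trailing coefficients follow the patterns of \eqref{Pol:1}, \eqref{Pol:2} or \eqref{Pol:3} has house at least $3/2$, which exceeds the target value, the latter being bounded above independently of $d$ through comparison with $x^{2p_1}+3x^{p_1}+1$, so that its $1/d$-th power lies well below $3/2$ for the relevant $d$. This confines attention to primitive polynomials with very small outer coefficients, which---using reciprocity to halve the free coefficients---one would try to organise into finitely many families per residue class of $d$ and settle by a sharp lower bound combined with a finite search.

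I expect this last step to be the true obstacle and, at present, out of reach: a lower bound for the house of an arbitrary primitive reciprocal polynomial sharp enough to meet $(\mathrm{mr}^{2p_1}(2p_1))^{1/d}$ is precisely the kind of estimate whose difficulty underlies the Schinzel--Zassenhaus problem, and the lemmata above merely prune the search without supplying it. A complete proof would therefore require either a new structural lower bound for primitive reciprocal polynomials or an effective reduction, uniform in $d$, to a computer-verifiable finite case analysis; absent these, the statement stands as a conjecture.
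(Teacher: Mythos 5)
The statement you were asked to prove is stated in the paper as a \emph{conjecture}: the paper offers no proof at all, only the computational evidence of Table \ref{table:nu} together with the reduction of the nonprimitive case via Lemma \ref{sec:PolProdRec} and Corollary \ref{sec:compositeRec}. Your proposal reconstructs exactly that supporting structure --- the upper bound from $\house{P(x^k)}=\sqrt[k]{\house{P(x)}}$, the ordering of nonprimitive competitors by the powerhouse $\mathrm{mr}^{b}(b)$ over primitive blocks (with the exceptional degrees $8$, $12$, $18$ matching $t\in\{4,6\}$ and $s=9$), and the pruning Lemmata \ref{sec:PolType}--\ref{sec:PolType3} --- and you correctly identify the missing piece: a lower bound on the house of an arbitrary \emph{primitive} reciprocal polynomial of degree $d$ sharp enough to beat $(\mathrm{mr}^{2p_1}(2p_1))^{1/d}$, which is of Schinzel--Zassenhaus strength and is precisely why the author could state this only as a conjecture. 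Your conclusion that the statement must remain conjectural absent such a bound or a uniform finite reduction is the accurate assessment, and your analysis of the tractable parts is consistent with the paper's own lemmas.
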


If the previous conjecture is true then we just need to determine $\mathrm{mr}(d)$ for $d/2$ is a prime number. If $d/2$ is a composite number we can easily calculate $\mathrm{mr}(d)=\mathrm{mr}^{p_1/d}(p_1)$ where $p_1$ is determined as in the Conjecture \ref{sec:compositeConjRec}. 

\begin{lemma}\label{sec:mddBoundedRec}
The sequence $(\mathrm{mr}^d(d))_{d\ge 1}$ is bounded and $U=6.854102\ldots$ is an upper bound.
\end{lemma}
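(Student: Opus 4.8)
The plan is to mirror the proof of Lemma~\ref{sec:mddBounded}, replacing the witness $x^d-2$ by an explicit reciprocal polynomial in each even degree. First I would identify the constant: $U=6.854102\ldots=\left(\tfrac{3+\sqrt5}{2}\right)^2$, and observe that $\tfrac{3+\sqrt5}{2}$ is exactly $\house{R_2}$ for $R_2(x)=x^2+3x+1$, the degree-$2$ reciprocal polynomial singled out just before Conjecture~\ref{sec:compositeConjRec}. Indeed its roots are $\tfrac{-3\pm\sqrt5}{2}$, the larger in modulus being $\tfrac{3+\sqrt5}{2}>1$, so by Kronecker its root is not a root of unity and $\mathrm{mr}(2)=\house{R_2}=\tfrac{3+\sqrt5}{2}$; thus $U=\mathrm{mr}^2(2)$, and the lemma asserts that no even degree does better than degree $2$ after taking the $d$-th power.

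Next I would produce a witness in each even degree $d$ by inflation. The polynomial $R_2(x^{d/2})=x^d+3x^{d/2}+1$ is reciprocal of degree $d$, and its root is not a root of unity since its house exceeds $1$. Applying the identity $\house{P(x^k)}=\sqrt[k]{\house{P}}$ recorded in the introduction with $P=R_2$ and $k=d/2$ gives $\house{R_2(x^{d/2})}=\house{R_2}^{2/d}=\left(\tfrac{3+\sqrt5}{2}\right)^{2/d}$. Since $\mathrm{mr}(d)$ is a minimum over all reciprocal non-roots-of-unity of degree $d$, this yields $\mathrm{mr}(d)\le\left(\tfrac{3+\sqrt5}{2}\right)^{2/d}$, and raising both sides to the $d$-th power gives $\mathrm{mr}^d(d)\le\left(\tfrac{3+\sqrt5}{2}\right)^2=U$, exactly parallel to Lemma~\ref{sec:mddBounded}.

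The one point requiring care — and the closest thing to an obstacle — is the parity of $d$, since the inflation exponent $d/2$ must be an integer. This is automatic: a self-reciprocal integer polynomial of odd degree is forced to vanish at $-1$, because pairing the coefficient $a_i$ with $a_{d-i}=a_i$ and using $(-1)^i+(-1)^{d-i}=0$ for odd $d$ makes all terms of $P(-1)$ cancel, so the only irreducible such polynomial is $x+1$. Consequently every reciprocal algebraic integer that is not a root of unity has even degree, and $\mathrm{mr}(d)$ is meaningfully defined precisely on even indices, where $d/2\in\mathbb{Z}$. On these terms the bound $\mathrm{mr}^d(d)\le U$ holds uniformly, so the sequence $(\mathrm{mr}^d(d))_{d\ge1}$ is bounded above by $U=6.854102\ldots$, completing the proof.
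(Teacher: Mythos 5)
Your proposal is correct and follows essentially the same route as the paper: the paper's proof bounds $\mathrm{mr}(d)\le\house{x^d+3x^{d/2}+1}=\sqrt[d/2]{2.618\ldots}$ and raises to the $d$-th power, which is exactly your inflation of $R_2(x)=x^2+3x+1$ via $\house{P(x^k)}=\sqrt[k]{\house{P}}$, with $U=\bigl(\tfrac{3+\sqrt5}{2}\bigr)^2$. Your additional remark that reciprocal algebraic integers which are not roots of unity have even degree (so $d/2\in\mathbb{Z}$) is a point the paper leaves implicit, and it is a welcome clarification rather than a deviation.
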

\begin{proof}
If $\mathrm{mr}(d)$ is attained for $\alpha_d$ then $$\mathrm{mr}(d)=\house{\alpha_d}\le \house{x^d+3x^{d/2}+1}= \sqrt[d/2]{2.618\ldots}.$$ The claim follows straightforwardly if we raise both sides of the inequality to the power $d$.
\end{proof}

\begin{corollary}\label{sec:mddAccumulationRec}
In the interval $[1,U]$ there is an accumulation point of the sequence $(\mathrm{mr}^d(d))_{d\ge 1}$.
\end{corollary}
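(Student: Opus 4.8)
The plan is to mirror the proof of Corollary \ref{sec:mddAccumulation}, simply replacing the upper bound $2$ by $U$. First I would invoke Lemma \ref{sec:mddBoundedRec} to conclude that the sequence $(\mathrm{mr}^d(d))_{d\ge 1}$ is bounded above by $U$. For the lower bound, I would recall from the introduction that the house of an algebraic integer which is not a root of unity strictly exceeds $1$; since each $\mathrm{mr}(d)=\house{\alpha_d}$ is attained by such an $\alpha_d$, we have $\mathrm{mr}(d)>1$, and raising to the power $d$ gives $\mathrm{mr}^d(d)>1$. Hence every term of the sequence lies in the closed interval $[1,U]$.

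Next I would apply the Bolzano--Weierstrass Theorem: a bounded real sequence admits a convergent subsequence, whose limit is by definition an accumulation point of the original sequence. Because all terms lie in the closed interval $[1,U]$, the limit of any such subsequence is again a point of $[1,U]$. This yields an accumulation point of $(\mathrm{mr}^d(d))_{d\ge 1}$ inside $[1,U]$, as required.

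There is essentially no obstacle in this argument, since it is a direct consequence of the boundedness established in Lemma \ref{sec:mddBoundedRec} together with the completeness of $\mathbb{R}$. The only point deserving explicit mention is the lower bound $1$, which is immediate from Kronecker's characterization of roots of unity recalled at the outset, guaranteeing $\house{\alpha_d}>1$ for each extremal reciprocal $\alpha_d$.
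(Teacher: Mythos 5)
Your proof is correct and follows exactly the paper's route: the paper also derives the corollary directly from Lemma \ref{sec:mddBoundedRec} together with the Bolzano--Weierstrass Theorem. Your explicit justification of the lower bound $1$ via Kronecker's theorem is a detail the paper leaves implicit, but it does not change the argument.
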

\begin{proof}
The claim is direct consequence of lemma \ref{sec:mddBoundedRec} and the Bolzano-Weierstrass Theorem.
\end{proof}

\section{Results}


\begin{table}[!htbp]
\caption{Extreme values of $\house{\alpha}$ for reciprocal $\alpha$ of even degree $d\le 34$. The minimum $\mathrm{mr}(d)$ is attained for an $\alpha$ with minimal polynomial $R_d(x)$ having $\nu$ roots outside the unit circle.} 
\label{table:nu} 
\begin{tabular}{rllllll} 
\hline\noalign{\smallskip}
$d$ &	$\nu$ &	$\mathrm{mr}(d)$ & $R_d(x)$ \\ [0.5ex] 
\noalign{\smallskip}\hline\noalign{\smallskip}
 2 & 1 & 2.61803398874989 &  1  3  \\
 4 & 2 & 1.53922233842043 &  1  1  3 \\
 6 & 2 & 1.32166315615906 &  1  2  2  1\\
 8 & 2 & 1.16928302978955 &  1  0  0  1  1\\
10 & 2 & 1.12571482154239 &  1  0  1  1  0  1\\
12 & 2 & 1.10805485364877 &  1  1  1  0 -1 -1 -1\\
14 & 4 & 1.09390168574961 &  1  0  0  0  1  1  0  1\\
16 & 4 & 1.08133391225354 &  $R_8(x^2)$\\ 
18 & 4 & 1.07185072135591 &  1  0  1  1  1  2  1  2  2  1\\
20 & 4 & 1.06099708837602 &  $R_{10}(x^2)$\\ 
22 & 4 & 1.06621758541355 &  1  1  0 -1  0  0  0  0  0 -1  0  1\\
24 & 4 & 1.05264184490679 &  $R_{12}(x^2)$\\ 
26 & 8 & 1.05784846909829 &  1  0  0  1  0 -1  0  0 -1 -1  1  0  0  2\\ 
28 & 8 & 1.04589755031246 &  $R_{14}(x^2)$\\ 
30 & 6 & 1.04026214469874 &  $R_{10}(x^3)$\\
32 & 8 & 1.03987206532993 &  $R_8(x^4)$\\ 
34 & 8 & 1.04961810533324 &  1  0  1  1  0  1  0  0  0  0  0  0  1  0  1  1  0  1\\
\noalign{\smallskip}\hline
\end{tabular}
\end{table}

\begin{table}[!htbp]
\caption{Let $\theta = 1. 3247\ldots$ is the real root
 of $x^3 - x-1$. Extreme values of $\house{\alpha}$ for $\alpha$ of degree $d\le 28$, calculated by Rhin and Wu \cite{RW2}, are greater than or equal to $\theta^{3/(2d)}$.} 
\label{table:theta} 
\begin{tabular}{rllllll} 
\hline\noalign{\smallskip}
$d$ &	$\mathrm{m}(d)$ & & $\theta^{3/(2d)}$ & $\mathrm{m}^d(d)$ & coefficients of $P_d(x)$\\ [0.5ex] 
\noalign{\smallskip}\hline\noalign{\smallskip}
  1 & 2 & $>$ & 1.524703 & 2 & 1 -2\\
  2 & 1.41421356 & $>$ & 1.234788 & 2 & 1 0 -2\\
  3 & 1.15096392 & $=$ & 1.150964 & 1.524703 & 1 1 0 -1\\
  4 & 1.18375181 & $>$ & 1.111210 & 1.963553 & 1 1 0 0 1\\
  5 & 1.12164517 & $>$ & 1.088020 & 1.775323 & 1 0 -1 -1 1 1\\
  6 & 1.07282986 & $=$ & 1.072830 & 1.524703 & $P_3(x^2)$\\
  7 & 1.09284559 & $>$ & 1.062110 & 1.861708 & 1 1 0 0 1 -1 -1\\
  8 & 1.07562047 & $>$ & 1.054140 & 1.791730 & 1 1 0 0 1 0 -1 0 1\\
  9 & 1.04798219 & $=$ & 1.047982 & 1.524703 & $P_3(x^3)$\\
 10 & 1.05907751 & $>$ & 1.043082 & 1.775323 & $P_5(x^2)$\\
 11 & 1.05712485 & $>$ & 1.039090 & 1.842422 & 1 1 0 0 1 1 0 -1 0 1 0 -1\\
 12 & 1.03577500 & $=$ & 1.035775 & 1.524703 & $P_3(x^4)$\\
 13 & 1.05372001 & $>$ & 1.032978 & 1.974367 & 1 0 -1 0 1 0 -1 -1 1 1 -1 -1 1 1\\
 14 & 1.04539255 & $>$ & 1.030587 & 1.861708 & $P_7(x^2)$\\
 15 & 1.02851905 & $=$ & 1.028519 & 1.524703 & $P_3(x^5)$\\
 16 & 1.03712124 & $>$ & 1.026713 & 1.791730 & $P_8(x^2)$\\
 17 & 1.03930211 & $>$ & 1.025122 & 1.925798 & 1 1 0 -1 -1 0 1 1 0 -1 -1 0\\
 &&&&& 1 1 0 -1-1-1\\
 18 & 1.02371001 & $=$ & 1.023710 & 1.524702 & $P_3(x^6)$\\
 19 & 1.03641032 & $>$ & 1.022448 & 1.972890 & 1 1 0 0 1 1 0 0 1 1 0 -1 0\\
 &&&&& 1 0 -1 0 1 0 -1\\
 20 & 1.02911491 & $>$ & 1.021314 & 1.775323 & $P_5(x^4)$\\
 21 & 1.02028875 & $=$ & 1.020289 & 1.524703 & $P_3(x^7)$\\
 22 & 1.02816577 & $>$ & 1.019358 & 1.842422 & $P_{11}(x^2)$\\
 23 & 1.02932014 & $>$ & 1.018508 & 1.943841 & 1 1 0 -1 -1 0 1 1 0 -1 -1 0\\
 &&&&& 1 1 0 -1 -1 0 1 1 0 -1-1-1\\
 24 & 1.01773032 & $=$ & 1.017730 & 1.524703 & $P_3(x^8)$\\
 25 & 1.02322489 & $>$ & 1.017015 & 1.775323 & $P_5(x^5)$\\
 26 & 1.02650865 & $>$ & 1.016355 & 1.974367 & $P_{13}(x^2)$\\
 27 & 1.01574486 & $=$ & 1.015745 & 1.524703 & $P_3(x^9)$\\
 28 & 1.02244440 & $>$ & 1.015178 & 1.861708 & $P_7(x^4)$\\
\noalign{\smallskip}\hline
\end{tabular}
\end{table}

In the Table \ref{table:nu} we listed irreducible, reciprocal, integer polynomials with even degree at most 34 having the smallest house.
We add the column $\theta^{3/(2d)}$ to the Table \ref{table:theta} so that it suggests the following
\begin{conjecture}[SZB]
There is a constant $T > 1$ such that if $\alpha$ is not a root of unity, then $\house{\alpha}\ge T^{1/d}$.
\end{conjecture}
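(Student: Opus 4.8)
The plan is to reformulate SZB in terms of the powerhouse $\mathrm{ph}(\alpha)=\house{\alpha}^{d}$: raising $\house{\alpha}\ge T^{1/d}$ to the $d$-th power, the statement says precisely that the powerhouse of every non-root-of-unity $\alpha$ is bounded below by a fixed $T>1$, equivalently that $\inf_{d}\mathrm{m}^{d}(d)=T>1$. Thus the entire content is a uniform positive lower bound on the powerhouse, and the natural first move is to relate this to the classical Schinzel--Zassenhaus conjecture (SZ), the benchmark already used in the paper.

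I would then show that SZB and SZ are equivalent, so that SZB is exactly as strong as the classical conjecture. For SZB $\Rightarrow$ SZ, apply $e^{x}\ge 1+x$ with $x=(\log T)/d$ to get $T^{1/d}=\exp((\log T)/d)\ge 1+(\log T)/d$, so SZB with constant $T$ gives SZ with $c=\log T$. For the converse, if $\house{\alpha}\ge 1+c/d$ holds for all $d\ge 1$ with a fixed $c>0$, then $\house{\alpha}^{d}\ge(1+c/d)^{d}$; since $(1+c/d)^{d}$ is increasing in $d$ with minimum $1+c$ at $d=1$, we obtain $\house{\alpha}^{d}\ge 1+c$, i.e.\ SZB with $T=1+c$. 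This reduces SZB to SZ.

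To pin down the sharp constant one should target $T=\theta^{3/2}=1.5247\ldots$, the value $\theta^{3/(2d)}$ appearing in Table \ref{table:theta} and realized with equality by the nonprimitive family $P_{3}(x^{k})$, whose powerhouse is identically $\theta^{3/2}$. The approach would be to prove $\mathrm{m}^{d}(d)\ge\theta^{3/2}$ for every $d$ by combining: the Matveev bounds of Theorem \ref{sec:Mat} for small and moderate $d$; Smyth's theorem on nonreciprocal integers, which is the source of Boyd's constant $(3/2)\log\theta$, to dispose of the nonreciprocal case; and the structural reductions of Corollary \ref{sec:composite} and Conjecture \ref{sec:compositeConj}, which would force the infimum of the powerhouse to be attained only along $P_{3}(x^{k})$.

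The hard part is this last step, and the equivalence of the second step shows why: SZB is not a corollary of anything currently available but is literally as deep as Schinzel--Zassenhaus. In particular the Matveev bound does not suffice, since $\house{\alpha}^{d}\ge\exp(\log(d+0.5)/d)\to 1$, so it gives no uniform $T>1$. Establishing that the infimum of the powerhouse is not merely positive but equals $\theta^{3/2}$, approached only through $P_{3}(x^{k})$, rests on the still-unproven structural conjectures and on Boyd's open claim that extremal $\alpha$ are nonreciprocal; hence a complete unconditional proof is out of reach by the present methods, and the realistic target is the conditional implication that SZ, together with the structural refinement, yields SZB with the sharp value $T=\theta^{3/2}$.
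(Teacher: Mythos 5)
The statement you were asked to prove is a \emph{conjecture}, and the paper contains no proof of it: its only support is numerical (the column $\theta^{3/(2d)}$ added to Table \ref{table:theta}, checked against $\mathrm{m}(d)$ for $d\le 28$), together with the one-directional remark that SZ follows from SZB by expanding $T^{1/d}$ as a Taylor series in $1/d$ and keeping two terms with $T=\theta^{3/2}$. So your concluding judgment---that no unconditional proof is available by present methods---is exactly right, and the verifiable parts of your proposal are correct and actually go beyond the paper. Your implication SZB $\Rightarrow$ SZ via $T^{1/d}=\exp((\log T)/d)\ge 1+(\log T)/d$ is the paper's observation in slightly different clothing ($e^{x}\ge 1+x$ in place of truncating the series). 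Your converse---SZ with constant $c$ gives $\house{\alpha}^{d}\ge(1+c/d)^{d}\ge 1+c$ by Bernoulli, hence SZB with $T=1+c$---is correct and does \emph{not} appear in the paper; it shows the two conjectures are qualitatively equivalent, which usefully tempers the paper's presentation of SZB as the stronger statement. Your identification of the family $P_{3}(x^{k})$ as having powerhouse identically $\theta^{3/2}$ is also correct and matches the repeated value $1.524703$ in the $\mathrm{m}^{d}(d)$ column.

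Two soft spots in your final, programmatic paragraph deserve flagging. First, the equivalence you proved is only at the level of ``some constant exists'': sharp SZ with Boyd's $c=\tfrac{3}{2}\log\theta$ yields, by your own argument, only $T=1+\tfrac{3}{2}\log\theta\approx 1.42$, since $(1+c/d)^{d}<e^{c}=\theta^{3/2}$ for every finite $d$; so the sharp form $T=\theta^{3/2}$ is strictly stronger than sharp SZ, and your closing claim that SZ ``together with the structural refinement'' yields $T=\theta^{3/2}$ would need a genuinely new input beyond SZ. Second, Smyth's theorem does not ``dispose of'' the nonreciprocal case at the claimed strength: it gives $M(\alpha)\ge\theta$, and since $\house{\alpha}^{d}\ge M(\alpha)$ one obtains only $T=\theta$ for nonreciprocal $\alpha$; the exponent $3/2$ requires control of the number $\nu$ of conjugates outside the unit circle (for $P_{3}(x^{k})$ one has $\nu=2d/3$, whence $\house{\alpha}=M^{1/\nu}=\theta^{3/(2d)}$), and no such bound on $\nu$ is available. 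Meanwhile the reciprocal case, where $M(\alpha)$ may be arbitrarily close to $1$ as far as anyone knows, is untouched by Smyth and is precisely where your equivalence shows SZB is as deep as SZ itself; the paper's Table \ref{table:sigma} data ($\mathrm{mr}^{d}(d)\ge 3.26\ldots$ for $d\le 34$) is evidence, not proof. With these caveats made explicit, your proposal is an accurate and somewhat sharper account of the status of SZB than the paper's own discussion.
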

It is easy to show that the conjecture of Schinzel and Zassenhaus is a direct consequence of
the previous conjecture.  In order to expand $T^{1/d}$ as a Taylor series in $1/d$, we use the known Taylor series of function $T^x$. Thus \[T^{\frac{1 }{d}}=1+\frac{\log(T)}{d}+\frac{\log^2(T)}{2!d^2}+\cdots+\frac{\log^k(T)}{k!d^k}+\cdots.\]
If $T=\theta^{3/2}$ and if we take only two terms of the series we get the conjecture of Schinzel and Zassenhaus with Boyd's \cite{B} suggestion for $c$.

\begin{table}[!htbp]
\caption{Let $\tau=1.125715\ldots$ be the house of $x^{10}+x^8+x^7+x^5+x^3+x^2+1$. Extreme values of $\house{\alpha}$ for reciprocal $\alpha$ of even degree $d\le 34$ are greater than or equal to $\tau^{10/d}$.} 
\label{table:sigma} 
\begin{tabular}{rllllll} 
\hline\noalign{\smallskip}
$d$ &	$\mathrm{mr}(d)$ & & $\tau^{10/d}$ & $\mathrm{mr}^d(d)$\\ [0.5ex] 
\noalign{\smallskip}\hline\noalign{\smallskip}
 2 & 2.618034 & $>$ & 1.807765 & 6.854102\\
 4 & 1.539222 & $>$ & 1.344531 & 5.613134\\
 6 & 1.321663 & $>$ & 1.218187 & 5.329969\\
 8 & 1.169283 & $>$ & 1.159539 & 3.494276\\
10 & 1.125715 & $=$ & 1.125715 & 3.268014\\
12 & 1.108055 & $>$ & 1.103715 & 3.425588\\
14 & 1.093902 & $>$ & 1.088265 & 3.513145\\
16 & 1.081334 & $>$ & 1.076819 & 3.494276\\ 
18 & 1.071851 & $>$ & 1.068000 & 3.486723\\
20 & 1.060997 & $=$ & 1.060997 & 3.268014\\ 
22 & 1.066218 & $>$ & 1.055301 & 4.098345\\
24 & 1.052642 & $>$ & 1.050578 & 3.425588\\ 
26 & 1.057848 & $>$ & 1.046599 & 4.514652\\
28 & 1.045898 & $>$ & 1.043199 & 3.513145\\ 
30 & 1.040262 & $=$ & 1.040262 & 3.268014\\
32 & 1.039872 & $>$ & 1.037699 & 3.494276\\ 
34 & 1.049618 & $>$ & 1.035443 & 5.188773\\
\noalign{\smallskip}\hline
\end{tabular}
\end{table}

In the following tables we listed irreducible, reciprocal, integer polynomials with even degree at most 34 having small house. If $d=2p$ where $p$ is a prime number then all found polynomials are primitive, otherwise we marked a primitive polynomial with the symbol $P$.
A polynomial which has small Mahler measure, less than $1.3$, we marked with the symbol $M$. This list is only known to be complete through degree 20. If $d>22$ only polynomials of height one are completely investigated.

\begin{table}[!htbp]
\caption{Irreducible, reciprocal, integer polynomials with even degree at most 34 having small house.} 
\label{table:small} 
\begin{tabular}{rlrllll} 
\hline\noalign{\smallskip}
$d$ &	House & Out & Coefficients \\ [0.5ex] 
\noalign{\smallskip}\hline\noalign{\smallskip}
 2 & 2.61803398874989 &  1 & 1  3  \\
\\
 4 & 1.53922233842043 &  2 &  1  1  3 P\\
 4 & 1.61803398874989 &  2 &  1  0  3 \\
\\
 6 &  1.32166315615906 &  2 &  1  2  2  1\\
 6 &  1.32471795724474 &  3 &  1  1 -1 -3\\
 6 &  1.33076841869444 &  2 &  1  1  0 -2\\
 6 &  1.33950727686218 &  2 &  1  0  2  1\\
\\
 8 &  1.16928302978955 &  2 &  1  0  0  1  1 P\\
 8 &  1.17050710134464 &  2 &  1  1  1  0 -1 P\\
 8 &  1.18375181855821 &  4 &  1  1  0  1  3 P\\
 8 &  1.21502361972591 &  2 &  1  2  2  1  1 P\\
 8 &  1.21962614693622 &  2 &  1  1  1  0  1 P\\
\\
10 &  1.12571482154239 &  2 &  1  0  1  1  0  1 M\\
10 &  1.13295293839656 &  2 &  1  1  0  0  0 -1 M\\
10 &  1.14208745799486 &  2 &  1  1  0 -1  0  0\\
10 &  1.16703006058662 &  2 &  1  0  0  0  1  1\\
10 &  1.17004216879649 &  2 &  1  0  0  1  0 -1\\
10 &  1.17628081825992 &  1 &  1  1  0 -1 -1 -1 M\\
\\
12 &  1.10805485364877 &  2 &  1  1  1  0 -1 -1 -1 PM\\
12 &  1.11850195225747 &  2 &  1  1  0  0  0 -1 -1 PM\\
12 &  1.12445269119837 &  2 &  1  0  1  0  0  1 -1 PM\\
12 &  1.12742072023266 &  4 &  1  1  1  1  1  2  3 P\\
12 &  1.12819252128504 &  2 &  1  0  1  1  1  2  1 PM\\
12 &  1.13861753595063 &  4 &  1  1  1  1  2  2  3 P\\
12 &  1.14103240247447 &  2 &  1  1  0 -1  0  1  1 P\\
12 &  1.14211801611167 &  2 &  1  0  0  1 -1 -1  1 P\\
12 &  1.14460531348308 &  2 &  1  2  2  1  1  1  1 P\\
\\
14 &  1.09390168574961 &  4 &  1  0  0  0  1  1  0  1\\
14 &  1.09663696733953 &  4 &  1  1  0  0  1  0 -1 -1\\
14 &  1.09873127474994 &  4 &  1  1  0 -1  0  1  1  1\\
14 &  1.10540085265079 &  3 &  1  1  1  0 -1 -1 -1 -1 M\\
14 &  1.10912255228309 &  4 &  1  0  0  0 -1  0  1  1\\
14 &  1.11020596746828 &  4 &  1  1  1  1  1  0  1  1\\
14 &  1.11132960322928 &  4 &  1  1  2  2  2  2  1  1\\
14 &  1.11141077514688 &  4 &  1  0 -1  0  1  1  0 -2\\
14 &  1.11157496383649 &  3 &  1  1  1  1  0 -1 -2 -3\\
\noalign{\smallskip}\hline
\end{tabular}
\end{table}

\begin{table}[!htbp]
\caption{Irreducible, reciprocal, integer polynomials with even degree at most 34 having small house. 
} 
\label{table:small1} 
\begin{tabular}{rlrllll} 
\hline\noalign{\smallskip}
$d$ &	House & Out & Coefficients \\ [0.5ex] 
\noalign{\smallskip}\hline\noalign{\smallskip}
16 &  1.08133391225354 &  4 &  1  0  0  0  0  0  1  0  1\\
16 &  1.08189976492494 &  4 &  1  0  1  0  1  0  0  0 -1\\
16 &  1.08568941631979 &  4 &  1  1  1  1  0  0 -1 -2 -1 P\\
16 &  1.08800359308148 &  8 &  1  0  1  0  0  0  1  0  3\\
16 &  1.09054731172112 &  4 &  1  0 -1 -1  0  2  1 -1 -1 P\\
16 &  1.09145310961609 &  4 &  1  0 -1  0  0  1  1 -1 -1 P\\
16 &  1.09341867119317 &  4 &  1  1  1  2  2  2  2  2  3 P\\
16 &  1.09441893214119 &  4 &  1  1  0  0  0  0  1  0 -1 P\\
\\
18 &  1.07185072135591 &  4 &  1  0  1  1  1  2  1  2  2  1 P\\
18 &  1.07715254391892 &  4 &  1  1  0 -1  0  0 -1 -1  1  2 P\\
18 &  1.08350235040111 &  4 &  1  0  1  0  0  1  0  1  1  1 P\\
18 &  1.08507352195696 &  4 &  1  0  0  1  1  0  0  2  1 -1 P\\
18 &  1.08914119715632 &  4 &  1  0  0  1  0  1  0  0  1 -1 P\\
18 &  1.09054731172112 &  4 &  1 -1  0  0  0  1 -1  0  1 -1 P\\
18 &  1.09059677435683 &  6 &  1  0  1  1  1  1  2  2  1  3 P\\
18 &  1.09151857842220 &  4 &  1  0  0  1  0  0  1 -1 -1  1 P\\
18 &  1.09217083605099 &  6 &  1  1  0 -1 -1  1  2  1  0 -1 P\\
18 &  1.09282468746958 &  3 &  1  1  0  0  0 -1 -1  0  0 -1 PM\\
18 &  1.09381566687105 &  4 &  1  0  1  1  1  1  1  1  1  0 P\\
\\
20 &  1.06099708837602 &  4 &  1  0  0  0  1  0  1  0  0  0  1\\
20 &  1.06440262043860 &  4 &  1  0  1  0  0  0  0  0  0  0 -1\\
20 &  1.06554639211891 &  4 &  1  1  0 -1 -1 -1 -1 -1  0  2  3 PM\\
20 &  1.06868491988746 &  4 &  1  0  1  0  0  0 -1  0  0  0  0\\
20 &  1.07086533169145 &  6 &  1  1  0 -1 -1 -1  0  0  0  1  2 P\\
20 &  1.07888517088957 &  8 &  1  1  0  0  1  1  0  1  2  0 -1 P\\
20 &  1.08029165533508 &  4 &  1  0  0  0  0  0  0  0  1  0  1\\
20 &  1.08081406854476 &  4 &  1  1  1  0  0  0  0 -1 -1  0  0 P\\
20 &  1.08093254714134 &  4 &  1  1  0  0  0 -1 -1  0  0  0  1 PM\\
20 &  1.08100667136043 &  4 &  1  1  0  0  0  0  0 -1  0  1  1 P\\
20 &  1.08111514762666 &  4 &  1  0  0  1  0  0  0  0 -1 -1  1 P\\
20 &  1.08168487499664 &  4 &  1  0  0  0  0  0  1  0  0  0 -1\\
20 &  1.08205695902988 &  4 &  1  1 -1 -1  1  0 -1  0  0  0  1 P\\
20 &  1.08213153864244 &  4 &  1  0 -1  1  0 -2  1  1 -1  0  1 P\\
20 &  1.08215867145095 &  6 &  1  1  1  0 -1 -2 -1 -1  1  1  1 P\\
20 &  1.08222782056950 &  4 &  1  1  0 -1  0  1  1  0  0  0  1 P\\
20 &  1.08228216492799 &  6 &  1  0 -1  0  1  1 -1  0  2  0 -1 P\\
20 &  1.08286885593631 &  6 &  1 -1  1  0  0  0  1 -1  0  1 -1 P\\
\noalign{\smallskip}\hline
\end{tabular}
\end{table}

\begin{table}[!htbp]
\caption{Irreducible, reciprocal, integer polynomials with even degree at most 34 having small house. 
} 
\label{table:small2} 
\begin{tabular}{rlrllll} 
\hline\noalign{\smallskip}
$d$ &	House & Out & Coefficients \\ [0.5ex] 
\noalign{\smallskip}\hline\noalign{\smallskip}
22 &  1.06621758541355 &  4 &  1  1  0 -1  0  0  0  0  0 -1  0  1 M\\
22 &  1.06827041313888 &  6 &  1  1  1  0 -1 -1  0  1  2  1 -1 -1\\
22 &  1.06843153438173 &  7 &  1  1  1  1  1  0  0 -1 -1 -2 -1 -1\\
22 &  1.06849271893547 &  7 &  1  0  1  0  1  0  0  0 -1  0 -2  1\\
22 &  1.06857505098600 &  8 &  1  0 -1  0  1  1 -1 -1  1  2  0 -3\\
22 &  1.07151243860039 &  4 &  1  0  1  1  0  1  0  1  1  1  2  1\\
22 &  1.07266460893982 &  5 &  1  1  1  0 -1 -1 -1  0  0  0  0 -1\\
22 &  1.07448519196034 &  6 &  1  1  0 -1  0  1  1  0  0  0  0 -1\\
22 &  1.07483796674177 &  4 &  1  1  0  0  0 -1 -1  0  0  0  1  1 M\\
22 &  1.07534302358553 &  4 &  1  1  1  0 -1 -1  0  0  1  0 -1 -1\\
22 &  1.07656105250927 &  6 &  1  0  0  1  0  0  0  1  1  0  1  1\\
22 &  1.07711967842550 &  9 &  1  0  1  0  1  0  1  1  0  2 -1  1\\
22 &  1.07719162675888 &  4 &  1  1  1  0  0 -1 -1 -1  0  1  2  2\\
22 &  1.07798582029041 &  7 &  1  1  1  1  0  0  0 -1  0 -1 -2 -2\\
\\
24 &  1.05264184490679 &  4 &  1  0  1  0  1  0  0  0 -1  0 -1  0 -1\\
24 &  1.05351295923098 &  6 &  1  0  0  0  0  0  0  0  0  1  0  0  1\\
24 &  1.05388045665602 &  6 &  1  0  0  1  0  0  1  0  0  0  0  0 -1\\
24 &  1.05759252657036 &  4 &  1  0  1  0  0  0  0  0  0  0 -1  0 -1\\
24 &  1.05784057193422 & 12 &  1  0  0  1  0  0  0  0  0  1  0  0  3\\
24 &  1.06003424557321 &  4 &  1  0  1  1  0  2  0  1  1  0  1  0  1 PM\\
24 &  1.06040213654932 &  4 &  1  0  0  0  1  0  0  0  0  0  1  0 -1\\
24 &  1.06177436224626 &  6 &  1  1  1  1  0 -1 -2 -2 -2 -1  1  2  3 P\\
24 &  1.06180069703907 &  8 &  1  0  1  0  1  0  1  0  1  0  2  0  3\\
24 &  1.06216407455959 &  4 &  1  0  0  0  1  0  1  0  1  0  2  0  1\\
24 &  1.06490580489257 &  4 &  1  1  1  0  0  0  0 -1 -1  0  0  0 -1 P\\
24 &  1.06535138762690 &  4 &  1  1  1  0 -1 -1 -1  0  0  0  0  0  1 PM\\
24 &  1.06537706223156 &  6 &  1  1  0 -1 -2 -1  1  2  2  0 -1 -1 -1 P\\
\\
26 &  1.05784846909829 &  8 &  1  0  0  1  0 -1  0  0 -1 -1  1  0  0  2\\
26 &  1.05968760806902 &  8 &  1  1  0  0  0  0  1  0 -1  0  1  1  1  1\\
26 &  1.06184735727122 &  6 &  1  0  1  0  0  1  0  1  1  0  1  0  0  1\\
26 &  1.06277446310360 & 10 &  1  0  0  0  0  0  1  1  0  0  0 -1  1  1\\
26 &  1.06342599606179 &  6 &  1  1  0  0  0  0  0  0  1  1  1  0 -1 -1\\
26 &  1.06345648424260 &  6 &  1  0  0  0  0  1  1  0  0  0  1  1  0  1\\
26 &  1.06559111842191 &  6 &  1  1  0  0  1  0 -1  0  1 -1 -1  0  0 -1\\
26 &  1.06596578704523 &  6 &  1  1  1  0  0  0  1  1  1  0 -1 -1  0  0\\
26 &  1.06619413895030 &  6 &  1  1  0  0  0  0  0 -1  0  1  0  0  0 -1\\
26 &  1.06644267309866 &  6 &  1  0  0  0  0  1  0  1  1  0  0  0  1  1\\
26 &  1.06666365977337 &  4 &  1  1  0  0  0 -1 -1  0  1  1  1  1  0 -1 M\\

\noalign{\smallskip}\hline
\end{tabular}
\end{table}

\begin{table}[!htbp]
\caption{Irreducible, reciprocal, integer polynomials with even degree at most 34 having small house. 
} 
\label{table:small3} 
\begin{tabular}{rlrllll} 
\hline\noalign{\smallskip}
$d$ &	House & Out & Coefficients \\ [0.5ex] 
\noalign{\smallskip}\hline\noalign{\smallskip}
28 &  1.04589755031246 &  8 &  1  0  0  0  0  0  0  0  1  0  1  0  0  0  1\\
28 &  1.04720435796435 &  8 &  1  0  1  0  0  0  0  0  1  0  0  0 -1  0 -1\\
28 &  1.04820383263464 &  8 &  1  0  1  0  0  0 -1  0  0  0  1  0  1  0  1\\
28 &  1.05138045095521 &  6 &  1  0  1  0  1  0  0  0 -1  0 -1  0 -1  0 -1\\
28 &  1.05314887470058 &  8 &  1  0  0  0  0  0  0  0 -1  0  0  0  1  0  1\\
28 &  1.05366311858595 &  8 &  1  0  1  0  1  0  1  0  1  0  0  0  1  0  1\\
28 &  1.05419618820658 &  8 &  1  0  1  0  2  0  2  0  2  0  2  0  1  0  1\\
28 &  1.05423468693972 &  8 &  1  0  0  0 -1  0  0  0  1  0  1  0  0  0 -2\\
28 &  1.05431255509763 &  6 &  1  0  1  0  1  0  1  0  0  0 -1  0 -2  0 -3\\
28 &  1.05616339145825 &  6 &  1  0  1  0  0  0  0  0  0  0 -1  0 -1  0 -1\\
28 &  1.05637230762463 &  6 &  1  1  1  0  0 -1 -1 -1  0  0  0  0  1  0  0 P\\
28 &  1.05798761666627 &  8 &  1  0  2  0  2  0  1  0  0  0  0  0  2  0  3\\
28 &  1.05910355609770 &  6 &  1  0 -1  0  0  1  0 -1  1  0  0 -1 -1  1  1 P\\
\\
30 &  1.04026214469874 &  6 &  1  0  0  0  0  0  1  0  0  1  0  0  0  0  0  1\\
30 &  1.04248694101431 &  6 &  1  0  0  1  0  0  0  0  0  0  0  0  0  0  0 -1\\
30 &  1.04528115508851 &  6 &  1  0  0  1  0  0  0  0  0 -1  0  0  0  0  0  0\\
30 &  1.04978612425248 &  6 &  1  0  1  1  1  2  1  3  2  3  3  3  4  3  4  3 PM\\
30 &  1.05283588953315 &  6 &  1  0  0  0  0  0  0  0  0  0  0  0  1  0  0  1\\
30 &  1.05374090226554 &  6 &  1  0  0  0  0  0  0  0  0  1  0  0  0  0  0 -1\\
30 &  1.05561042764362 &  3 &  1  0  0  1  0  0  0  0  0 -1  0  0 -1  0  0 -1\\
30 &  1.05736311561234 & 10 &  1  0  0  0  0  2  0  0  0  0  2  0  0  0  0  1\\
30 &  1.05737367134034 &  6 &  1  1  0  0  0 -1 -1  0  0  0  1  1  0  0  0 -1 PM\\
30 &  1.05785144758134 & 15 &  1  0  0  0  0  1  0  0  0  0 -1  0  0  0  0 -3\\
30 &  1.05836091217175 &  6 &  1  1  0  0  0  0  0  0  1  0 -1  0  0  0  1  1 P\\
30 &  1.05867431886766 &  6 &  1  0  1  0  0  0 -1  0 -1  1  0  1  1  0  1 -1 P\\
\\
32 &  1.03987206532993 &  8 &  1  0  0  0  0  0  0  0  0  0  0  0  1  0  0  0  1\\
32 &  1.04014410776822 &  8 &  1  0  0  0  1  0  0  0  1  0  0  0  0  0  0  0 -1\\
32 &  1.04196421067126 &  8 &  1  0  1  0  1  0  1  0  0  0  0  0 -1  0 -2  0 -1\\
32 &  1.04307410718581 & 16 &  1  0  0  0  1  0  0  0  0  0  0  0  1  0  0  0  3\\
32 &  1.04429273277234 &  8 &  1  0  0  0 -1  0  1  0  0  0 -2  0  1  0  1  0 -1\\
32 &  1.04472633240294 &  8 &  1  0  0  0 -1  0  0  0  0  0  1  0  1  0 -1  0 -1\\
32 &  1.04566661570176 &  8 &  1  0  1  0  1  0  2  0  2  0  2  0  2  0  2  0  3\\
32 &  1.04614479501702 &  8 &  1  0  1  0  0  0  0  0  0  0  0  0  1  0  0  0 -1\\
32 &  1.04891831290646 &  8 &  1  0  0  0  0  0  1  0 -1  0  0  0  0  0 -1  0  1\\
32 &  1.04989575593276 &  8 &  1  0  0  0  2  0  0  0  2  0  0  0  1  0  0  0  1\\
32 &  1.05025977911128 &  8 &  1  0  1  0  1  0  1  0  0  0  1  0  1  0  0  0  1\\
32 &  1.05069363916743 &  8 &  1  0  0  0  0  0  1  0  0  0  0  0  1  0  0  0 -1\\
32 &  1.05077230417714 &  8 &  1  0 -1  1  0 -1  1 -1 -1  1 -1  0  1 -1  1  1 -1 P\\
\noalign{\smallskip}\hline
\end{tabular}
\end{table}

\begin{table}[!htbp]
\caption{Irreducible, reciprocal, integer polynomials with even degree at most 34 having small house. 
} 
\label{table:small4} 
\begin{tabular}{rlrllll} 
\hline\noalign{\smallskip}
$d$ &	House & Out & Coefficients \\ [0.5ex] 
\noalign{\smallskip}\hline\noalign{\smallskip}
\\
34 &  1.04961810533324 &  8 &  1  0  1  1  0  1  0  0  0  0  0  0  1  0  1  1  0  1\\
34 &  1.05022062041836 &  7 &  1  1  1  1  0 -1 -2 -2 -2 -1  1  2  3  2  1 -1 -3 -3 M\\
34 &  1.05071690069432 &  8 &  1  1  0 -1  0  1  1  0  0  0  0  0  1  1  0 -1  0  0\\
34 &  1.05082250196013 &  8 &  1  1  0  0  0  0  1  1  0  0  0  0  1  0 -1  0  1  1\\
34 &  1.05105473087034 &  6 &  1  0  1  0  0  0  0  0  1  1  1  1  0  1  0  1  1  1\\
34 &  1.05115446958173 &  8 &  1  0  0  1  0 -1  1  0 -1  0  1  0  0  1  1 -1  0  1\\
34 &  1.05136643237339 &  6 &  1  1  1  0  0  0  1  0  0 -1  0  0  1  0  0 -1  0 -1\\
34 &  1.05182663296743 &  8 &  1  0 -1  0  1  1 -1 -1  1  1  0  0  0  0  1  1  0 -1\\
34 &  1.05221475176357 &  7 &  1  1  0  0  0 -1 -1  0  0  0  1  1  0  0  0 -1 -1 -1 M\\
34 &  1.05372780022456 &  8 &  1  0  0  1  0  0  0  0  0 -1  0  0 -1  1  1  0  1  1\\
34 &  1.05394569820733 &  8 &  1  0  1  0  1  1  1  1  1  1  1  1  1  1  1  0  1  0\\
34 &  1.05406220416025 &  8 &  1  1  1  1  0  0  0 -1  0  1  0  1  0 -1  1  0  0  1\\
\noalign{\smallskip}\hline
\end{tabular}
\end{table}


\clearpage
\bibliographystyle{amsplain}

\end{document}